\newtheorem{thm}{Theorem}[section]
\newtheorem{cor}[thm]{Corollary}
\newtheorem{lem}[thm]{Lemma}
\newtheorem{eq}[thm]{Equation}
\newtheorem{defn}[thm]{Definition}
\newtheorem{remk}[thm]{Remark}
\numberwithin{equation}{section}
\begin{document}

\title{Conformal Automorphism Groups, Adapted Generating Sets and Bases}
\author{Jane Gilman}%

\address{Mathematics and Computer Science Department, Rutgers University, Newark, NJ 07102}%

\email{gilman@rutgers.edu, jgilman@math.princeton.edu}%

\thanks{Some of this work was carried out when the author was partially supported at ICERM, Princeton University, at the NSF as an IPA  or was  visitor at the CUNY Graduate Center.}


\keywords{conformal automorphism, Schreier-Reidemeister rewriting system, adapted generating sets, and  Riemann surfaces}%

\date{revision of \today}


\begin{abstract} Let $S$ be a compact Riemann surfaces of genus $g \ge 2$ and $G$ a conformal automoprhism group of order $n$ acting on $S$. In this paper we give the definition of an adapted generating set and an  adapted  basis for the first homology group of such a compact Riemann surface. This generating set and basis reflect the action of $G$ in as simple manner as possible. This can be seen in the matrix of the action of $G$ which we obtain. We prove the existence of such a generating set and basis for any conformal group acting on such a surface and find the matrix. This extends our earlier results on adapted bases and matrices for automorphism groups of prime orders and other specific groups.
\end{abstract}


\maketitle
\tableofcontents



\newpage

\section{Introduction}
Our goal here is to derive results about compact Riemann surfaces with conformal automorphism groups. In particular we will define the notion of an adapted generating set for first homology group and an adapted basis, ones that are  adapted to the conformal automorphism group. We  prove the existence of such a generating set and basis for any given group of finite order that acts on a compact surface of a given genus at least two and find the matrix of the action. The purpose of the adapted generating set and adapted basis is to reflect the action of the group in as simple a manner as possible. The original goal was to use adapted bases to identify the Jacobians corresponding to surfaces with certain conformal automorphisms and to produce the structure of a quasi-projective variety on the moduli space of punctured surfaces.  Here we extend results and definitions in \cite{thesis,Moduli, JG1, matrix,Jalg, India}.

We note that the results  and their  proofs can be obtained using various equivalent techniques, namely the technique of Schreier-Reidemeister, curve lifting, or fundamental polygons and surface symbols. Often
results from these techniques are used in combination. Here we concentrate on  the Schreier-Reidemeister theory as it seems to give the cleanest proofs once the notation is set.  The notation needed can be cumbersome and some results obtained from other techniques are used at times.

\section{Method Overview} 
We determine  a set of curves on a genus $g$ surface,  $S$,  that generate the fundamental group of $S$ together with the action of the elements of a finite conformal automorphism group, $G$,  on these curves and the action on the abelianization of the fundamental group,  that is, the action on an integral basis for the first homology. First applying the Schreier-Reidemeister Method (section 2.3 of \cite{MKS}),
to our situation we  obtain generators and relations for $\Gamma$, the uniformizing group of $S$. These are more generators and relations than needed. Thus we eliminate some generators and relations to obtain a single defining relation for $\Gamma$ and $2g$ generators. We compute the conjugations action of $G$ on the generators and the matrix of the action of $G$ on the abelianized fundamental group, the integral homology group.  As noted above the proofs of the theorems using the Schreier-Reidemeister  may be preferable to other techniques because such proofs use a less ad hoc method.

 Our main result is to find the adapted generating set and homology basis and homotopy bases for a conformal group given a surface and its surface kernel map or equivalently its generating vector and to use the results to describe the action of the group on the generating set and the basis.


\subsection{Further Details on Method}

We work with the uniformizing groups $\Gamma$ and $\Gamma_0$ of the surface $S$ and the quotient surface $S_0$.  
We let $\phi$ be the surface kernel map, so that $\phi(\Gamma_0)=G$ and $\Gamma$ is what is known as the surface kernel. We then calculate the action of the automorphism group by conjugation on the surface kernel.
The elimination of generators  and relations to one relation is done in a manner that first allows us to compute the action of $G$ on the fundamental group. Usually this will be a cumbersome action, although one can and does write it down. One consider the image of the action on the first homology, the abelianization of the fundamental group. This is less complicated to write down and often equally useful. We  find a generating set for homology on which the action is fairly simple and a homology basis on which the action is relatively simple.

We use the generating vector or the equivalent surface kernel map,  appropriate automorphisms of $\Gamma_0$ and $G$, a well-chosen set of minimal Schreier representatives and a scheme for the elimination of generators and relations to obtain Theorem \ref{thm:SR}. This is  similar to the proof in the prime order case. The next main result uses a reduction and elimination process applied to the result of Theorem \ref{thm:SR}  to obtain  the single defining relation and set of $2g$ generators for $\Gamma$ (Theorem  \ref{thm:Algorithm}) and finally by computing the action of $G$ on $S$ we obtain the theorems on adapted generating sets and basis (Theorems \ref{thm:adaptedHOM}  and \ref{thm:MAB}).
\subsection{Organization} \label{sec:organization}

This paper consists of three main parts. Part 1 involves elimination to one relation and $2g$ generators. Part 2 involves the action
of $G$ on $\Gamma$, the kernel. Part 3 involves adapted homology generating sets and bases, their  definitions and existence.
The main results including the Definitions  \ref{def:AHB} and \ref{def:adaptedHomologyBasis}and Theorems \ref{thm:adaptedHOM} and  \ref{thm:MAB} appear in sections \ref{sec:adapted} and \ref{sec:reductionTObasis}.

Part \ref{part:ONE} begins by fixing notation (section \ref{sec:notation}), next  it reviews coverings and the  Riemann-Hurwitz count,  and then it reviews the Schreier-Reidemeister theory (section \ref{sec:SRSum}). It concludes  with the application of  the Schreier-Reidemeister theory to our situation (section \ref{sec:SR}) to obtain a set of generators and relation for the normal subgroup, $\Gamma$ of $\Gamma_0$,  in  Theorem \ref{thm:SR} and Corollary \ref{cor:numbers}. Part \ref{part:TWO} is a section on elimination. In section \ref{sec:redElimProc} and the following sections in this part  we  reduce the presentation for the normal subgroup to a set of $2g$ generators and a single defining relation (Theorem \ref{thm:Algorithm}). In Part \ref{part:THREE} we compute the action of the group $G$  on the normal subgroup $\Gamma_0$  and we define  of an adapted generating set for homology and adapted homology basis and prove their existence. Theorem \ref{thm:adaptedHOM},  Theorem \ref{thm:MAB} obtaining the matrix of the action as a corollary.
\vskip .1in


\part{Preliminaries:  Notation, Background and First Result: Application of the Shreier-Reidemeister Theory} \label{part:ONE}

\section{Notation} \label{sec:notation}
Let $S$ be a compact surface of genus $g$, $G$ a conformal automorphism group of order $n$ and $S_0 = S/G$ the quotient surface. Assume that $U$ is the unit disc and that $\Gamma$ and $\Gamma_0$ are Fuchsian groups with $S=U/\Gamma$ and  $S_0=U/\Gamma_0$ where $\Gamma/\Gamma_0$ is isomorphic to $G$. $\Gamma$ and $\Gamma_0$ are, of course, isomorphic to the fundamental groups of $S$ and $S_0$ with appropriate choice of base points. Let $\phi: \Gamma_0 \rightarrow G$ be the surface kernel homomorphism so that $\Gamma$ is the kernel of $\phi$.
. Knowing the isomorphism between the fundamental group of $S$ and $\Gamma$, we speak about the action of $G$ on $S$ meaning the  equivalent action of $\Gamma_0$ on $\Gamma$ which is given by conjugation. We may assume that $\Gamma_0$ has presentation given
by

\begin{equation} \label{eq:presentation}
\Gamma_0=
\langle a_1,...,a_{g_0}, b_1, ..., b_{g_0}, x_1,...,x_r  \; |\; (\Pi_{i=1}^{g_0} [a_i,b_i])x_1 \cdots x_r =1; \;\; x_i^{m_i} =1 \rangle
\end{equation}

Here the $m_i$ are positive integers each  at least $2$ and $[r,s]$ denotes the multiplicative commutator of $r$ and $s$. Any elliptic element of the group is conjugate to one of the $x_i$. No two different $x_i$'s are conjugate.

Knowing the surface kernel map is equivalent to knowing the generating vector, the vector of length  $2g_0 +r$
given by $$(\phi(\alpha_1), ... ,  \phi(\alpha_{g_0}),\phi(\beta_1),..., \phi(\beta_{g_0}), \phi(x_1), ..., \phi(x_r)).$$

Let $\pi: S \rightarrow S/G=S_0$. If $p$ is fixed by an element of $G$, then $G_p$, the stabilizer of $p$ is cyclic, say of order $m_p$.
We let $p_0= \pi(p)$. Then $\pi^{-1}(p_0)$ consists of $n/m_p$ distinct points each with a cyclic of stabilizer of order $m_p$.  The point $p_0 \in S_0$ is called a {\sl branch point} and $p$ is called a {\sl ramification point}. If the covering has $r$ branch points of order $m_1,...,m_r$, then over $p_0$ instead of $n$ points, there are ${\frac{n}{m_p}}(m_p-1)$ points. That is, at each point over $p_0$ we are missing $m_p-1$ points so there is a total of ${\frac{n}{m_p}}(m_p-1)$ missing points. Over $p_0$ there are ${\frac{n}{m_p}}$ ramification points.

The presentation \ref{eq:presentation} is equivalent to there being are $r$ branch points of order $m_i, i=1,...,r$, and thus by the Riemann-Hurwitz relation we have

\begin{equation} \label{eq:RH}
2g-2 = n(2g_0 -2) + n \cdot \Sigma_{i=1}^r (1- {\frac{1}{m_i}})
\end{equation}

We note for future use that if one has eliminated generators and relations and one has exactly one relation in which every generator and its inverse occurs exactly once, then it is equivalent by a the standard algorithm in \cite{Springer} to a standard surface presentation. The algorithm in \cite{Springer} is given geometrically in terms of cutting an pasting sides of a fundamental polygon but translates to a purely algebraic algorithm (see \cite{Jalg}). A standard surface presentation is one given by $2g$ generators and one relation which is the product of commutators.

We note that in using \ref{eq:presentation} we may assume by the Poincar{\'e}{\'e} Polygon theorem \cite{Beard, Maskit} certain angle conditions at the vertices of the elliptic elements are satisfied. Further, the relation of the presentations for $\Gamma_0$ and $\Gamma$ to the branch structure allows us to assume that the order of an elliptic generator  and that of its image in $G$ are the same.

We use $\approx$ to denote that words in  a group are freely equal and
$\approx^h$ to denote that two curves are homologous, that is equal  as elements of an integral homology basis and we write the integral homology additively. For simplicity we use the same notation for a curve, its equivalence class as an element of the fundamental group and its image in homology.

\section{Background: Summary of the Schreier-Reidemeister theory} \label{sec:SRSum}
In this section we first recall standard terminology, facts and theorems for the Schreier-Reidemeister Theory (see section 2.3 of \cite{MKS}.)
Next we choose a set of Schreier right coset representatives for $\Gamma_0$ modulo $\Gamma$ and denote an arbitrary representative by $K$. We let $\tau$ be the rewriting system so that the generators of $\Gamma$ are given by $S_{K,a} = Ka{\overline{KA}}^{-1}$ where $\overline{X}$ denotes the coset representative of $X$.  Our first result is comes from the application of the Reidemeister-Schreier theorem to our situation (Theorem 2.9 page 94 \cite{MKS}). It would be nice to simplify the notation for this,  but I have yet to find a better notation.
\subsubsection{Schreier Representatives}

Let $G$ be a finite group of order $n$,  $\Gamma_0$ a group with known presentation, $\phi$ a group  homomorphism of $\Gamma_0$ onto  $G$ and $\Gamma$ the kernel of $\phi$.  We let $K_1, ..., K_n$ a set of right coset representatives for $\Gamma_0$ with $K$ denoting an arbitrary one of these chosen right-coset representative.

\begin{defn}

A  {\sl Schreier right coset function} is a right coset function  where the initial segment of any coset is again a right coset representative. It is a minimal if the length of any coset does not exceed the length of any coset it represents.
 We call the set of cosets a {\sl Schreier} system and denote the coset of word $W$ in $\Gamma_0$ by ${\overline{W}}$.
\end{defn}

Note that we can always choose a set of {\sl minimal} Schreier representatives, a set where each representative is of minimal length in its coset.

A rewriting process is  process that takes a word that is in $\Gamma$ but that is given in the generators of $\Gamma_0$ and writes it as a word in generators for $\Gamma$. A {\sl Schreier-Reidemeister rewriting process} is one that uses a Schreier system. Here we denote our rewriting process by $\tau$.

\subsection{The rewriting system $\tau$}

Assume $\Gamma_0$ has presentation given by generators $w_1,...,w_q$ with relations $R_u(w_1,...,w_v)$ for some integers $q$, $u$  and $v$ where the $R_u$ are words in the generators.

We remind the reader that a Schreier-Reidemeister rewriting process $\tau$ writes a word in the generators of $\Gamma_0$ that lies in $\Gamma$ in terms of the generators $S_{K,v}$ for $\Gamma$ where $K$ runs over a complete set of Schreier  minimal coset representatives. 

We let ${\overline{X}}$ denote the coset representative of $X$ in $\Gamma_0$. That is, the element $K$ where $\phi(K) = \phi(X)$. The element $S_{K,v} \in \Gamma$ is the element $Kv{\overline{Kv}}^{-1}$.

The rewriting process is defined as follows:

Let $X= a_1^{\epsilon _i} \dots a_u^{\epsilon_u}$, where each $\epsilon_i$ is either $+1$ or $-1$.
Then $$\tau(X) = \Pi_{i=1}^r S_{V_i,a_i}^{\epsilon_i},$$
 where $V_i$ depends upon $\epsilon_i$. Namely, if $\epsilon_i = +1$, then $V_i = {\overline{a_1^{\epsilon_1} \dots a_{i-1}^{\epsilon_{i-1}}} }$ and if
$\epsilon_i = -1$, then $V_i = {\overline{a_1^{\epsilon_1} \dots a_{i-1}^{\epsilon_{i-1}} a_i^{-1}}}$.


\section{Application of the Schreier-Reidemeister Theorem} \label{sec:SR}

Apply  the Schreier-Reidemeister Theorem to our situation to obtain:

\begin{thm} 
\label{thm:SR} {\rm(Presentation for $\Gamma$ with Schreier generators)}

Let $\Gamma_0$ have generators
 $$ a_1,...,a_{g_0}, b_1,...,b_{g_0}, x_1, ...x_r $$
and relations
 $$ R=  \Pi_{i=1}^{g_0} [a_i,b_i]  \cdot x_1 \cdots x_r =1,    \;\;\; x_i^{m_i}=1$$

and let $\Gamma$ be the subgroup of $\Gamma_0$ with $G$ as above isomorphic to  $\Gamma_0 / \Gamma$.

The $\Gamma$ can be presented with generators 

$$  S_{K,a_i}, \;\; S_{K,b_i}, i =1,...,g_0,   \mbox{ and } \;\; S_{K,x_j},  j=1,...,r$$
and relations
$$\tau(KRK^{-1}) = 1,\;\; \tau(Kx_j^{n_j}K^{-1})=1,\;\; j = 1,...,r, \;\; S_{M,v}=1$$
where $M$ is a Schreier representative and $v$ is a generator such that $Mv={\overline{Mv}}$
\end{thm}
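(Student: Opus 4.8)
The plan is to apply the Reidemeister--Schreier theorem (Theorem 2.9, page 94 of \cite{MKS}) directly to the presentation of $\Gamma_0$ in \eqref{eq:presentation}, using the rewriting process $\tau$ and a chosen set of minimal Schreier coset representatives $K_1,\dots,K_n$ for $\Gamma$ in $\Gamma_0$. The Reidemeister--Schreier theorem states precisely that if $\Gamma_0 = \langle w_1,\dots,w_q \mid R_1,\dots,R_s\rangle$ and $\Gamma$ is a subgroup with Schreier coset representative system $\{K\}$, then $\Gamma$ has a presentation whose generators are the symbols $S_{K,w_j}$ (one for each representative $K$ and each generator $w_j$) and whose relations are, first, the rewritten conjugated relators $\tau(K R_u K^{-1})$ as $K$ ranges over all representatives and $R_u$ over all relators of $\Gamma_0$, and second, the ``trivial'' relators $S_{M,v}=1$ arising whenever the generator $S_{M,v}$ equals the identity, i.e.\ whenever $Mv = \overline{Mv}$ as coset representatives. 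The theorem is thus a direct specialization, and the bulk of the proof is simply identifying the data of our situation with the hypotheses of the general theorem.

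The key steps, in order, are as follows. First I would invoke the general Reidemeister--Schreier theorem with $\Gamma_0$ given by generators $a_1,\dots,a_{g_0},b_1,\dots,b_{g_0},x_1,\dots,x_r$ and relators $R = \bigl(\prod_{i=1}^{g_0}[a_i,b_i]\bigr)x_1\cdots x_r$ together with $x_j^{m_j}$ for $j=1,\dots,r$. Second, I would read off the generating set: the theorem produces one generator $S_{K,v}$ for each pair $(K,v)$ with $K$ a Schreier representative and $v$ a generator of $\Gamma_0$, which gives exactly the listed symbols $S_{K,a_i}, S_{K,b_i}, S_{K,x_j}$. Third, I would read off the relations: each relator $R_u$ of $\Gamma_0$ contributes the family $\tau(K R_u K^{-1})$ as $K$ runs over the representatives, yielding $\tau(KRK^{-1})=1$ from the long relator and $\tau(Kx_j^{m_j}K^{-1})=1$ from the torsion relators (note the statement writes $n_j$, which should read $m_j$ to match \eqref{eq:presentation}). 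Finally, the defining relations $S_{M,v}=1$ whenever $Mv=\overline{Mv}$ come from the part of the Reidemeister--Schreier theorem that discards the redundant generators corresponding to Schreier representatives whose extension by a generator is again a representative; because the system is Schreier, this is exactly the condition that the Schreier generator be trivial.

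The main obstacle is not any deep argument but rather bookkeeping and correct bookkeeping of the indexing: one must be careful that $\tau$ is the specific rewriting process defined in Section~\ref{sec:SRSum}, that the conjugates $KRK^{-1}$ lie in $\Gamma$ (which holds because $R$ is a relator, hence trivial in $\Gamma_0$, so its conjugate is certainly in the normal subgroup $\Gamma$), and that every relator of the subgroup presentation is accounted for. The one genuinely verifiable point is that $\Gamma$ is \emph{normal} in $\Gamma_0$ with quotient $G$ via the surface kernel map $\phi$; this guarantees that the coset structure behaves well and that the listed generators indeed generate $\Gamma$. Since the Reidemeister--Schreier theorem is quoted as a black box, the proof reduces to matching notation and confirming that the hypotheses (a presentation for $\Gamma_0$ and a Schreier coset system) are in place, so I would expect the proof to be short, with the only real care needed in transcribing the $\tau$-rewrites of the conjugated relators and in the remark that minimal Schreier representatives can always be chosen, as noted after the definition in Section~\ref{sec:SRSum}.
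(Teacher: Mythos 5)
Your proposal is correct and follows essentially the same route as the paper: the paper proves Theorem \ref{thm:SR} precisely by specializing Theorem 2.9 (page 94) of \cite{MKS} to the presentation \eqref{eq:presentation} of $\Gamma_0$ with the rewriting process $\tau$ and a minimal Schreier coset system, exactly as you do. Your observation that the exponent $n_j$ in the statement should read $m_j$ to match \eqref{eq:presentation} is a correct and worthwhile notational catch.
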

\begin{cor} \label{cor:numbers}
If $\tau$ is a Reidemeister-Schreier rewriting process, then $\Gamma$ can be presented as a group with $2ng_0 + nr$  generators and $n + (n-1) + \Sigma_{i=1}^r {\frac{n}{m_i}}$ relations.
\end{cor}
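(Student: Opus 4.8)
\subsection*{Proof proposal for Corollary~\ref{cor:numbers}}

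The plan is to read both counts directly off the presentation produced in Theorem~\ref{thm:SR}, handling the Schreier generators, the trivial Schreier relations, the long relator, and the elliptic relators separately. First I would count the generators: Theorem~\ref{thm:SR} attaches to each of the $n$ Schreier representatives $K$ one generator $S_{K,v}$ for each of the $2g_0+r$ generators $v$ of $\Gamma_0$ (the $a_i$, the $b_i$, and the $x_j$). Hence there are $n(2g_0+r)=2ng_0+nr$ Schreier generators, which is the asserted generator count and needs no further work.

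Next I would count the three families of relations. The conjugates $\tau(KRK^{-1})=1$ of the long relation are indexed by the $n$ representatives $K$, and since $R=\Pi_{i=1}^{g_0}[a_i,b_i]\,x_1\cdots x_r$ is not a proper power it has no nontrivial rotational symmetry, so the $n$ conjugates remain irredundant and contribute exactly $n$ relations. The relations $S_{M,v}=1$ coming from the trivial Schreier generators (those $M,v$ with $Mv=\overline{Mv}$) I would count by the standard Nielsen--Schreier bookkeeping: the full preimage of $\Gamma$ in the free group on the $2g_0+r$ letters is a subgroup of index $n$, hence of rank $n(2g_0+r)-n+1$, so exactly $n(2g_0+r)-\bigl(n(2g_0+r)-n+1\bigr)=n-1$ of the $n(2g_0+r)$ generators $S_{K,v}$ are trivial. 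Equivalently, a Schreier transversal is a spanning tree of the Schreier coset graph on $n$ vertices, which has $n-1$ edges. This supplies the term $n-1$.

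The remaining, and only delicate, point is the elliptic relators, which is where I expect the main obstacle to lie. Applied verbatim, Theorem~\ref{thm:SR} lists $\tau(Kx_j^{m_j}K^{-1})=1$ for every representative $K$, i.e.\ $n$ relations for each $j$; the task is to reduce these to $n/m_j$. The mechanism is that $x_j^{m_j}$ is invariant under conjugation by $x_j$, so if $K'$ lies in the same right coset $\phi(K)\langle\phi(x_j)\rangle$ as $K$, then the relator $\tau(K'x_j^{m_j}K'^{-1})$ is a cyclic rotation of $\tau(Kx_j^{m_j}K^{-1})$; here $\phi(x_j)$ has order $m_j$ in $G$ because the elliptic generator and its image share their order, as recorded in Section~\ref{sec:notation}. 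A cyclic rotation of a relator is a consequence of that relator together with the already-listed relations, so only one relator is needed per right coset of $\langle\phi(x_j)\rangle$, and there are precisely $n/m_j$ such cosets. Summing over $j$ gives $\sum_{i=1}^r n/m_i$, and adding the three families yields the total $n+(n-1)+\sum_{i=1}^r n/m_i$. The hard part will be making this last redundancy precise---checking that the $m_j$ rotations are genuinely derivable from a single representative relator rather than merely conjugate in the ambient free group---which I would carry out by tracking how $\tau$ transforms under the substitution $K\mapsto \overline{Kx_j}$ and absorbing the discrepancies into the trivial-generator relations $S_{M,v}=1$.
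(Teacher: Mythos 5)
Your proposal is correct, and it supplies a proof where the paper offers none: Corollary \ref{cor:numbers} is stated immediately after Theorem \ref{thm:SR} with no argument, and the sentence that follows it in the paper even records the unrefined Reidemeister--Schreier count ($n$ $R$-relations, $nr$ elliptic relations, $n-1$ $M$-relations), so the reduction from $nr$ to $\Sigma_{i=1}^r n/m_i$ elliptic relations --- the only non-obvious part of the statement --- is left entirely implicit. Your handling of the other three counts (the $2ng_0+nr$ generators, the $n$ relations $\tau(KRK^{-1})$, and the $n-1$ trivial relations counted via the Nielsen--Schreier rank, equivalently via the spanning tree of the coset graph, which is the device the paper itself invokes in section \ref{sec:Mremove}) is exactly the bookkeeping the paper takes for granted. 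Your mechanism for the elliptic reduction is also the right one, and it closes cleanly: with $K'=\overline{Kx_j}$ one has $K'x_j^{m_j}K'^{-1}=S_{K,x_j}^{-1}\,(Kx_j^{m_j}K^{-1})\,S_{K,x_j}$ in $\Gamma_0$; after the prefix symbols contributed by the letters of $K$ are removed (each is of the form $S_{M,v}$ with $Mv=\overline{Mv}$, hence killed by the $M$-relations), $\tau(Kx_j^{m_j}K^{-1})$ becomes the word $S_{K,x_j}S_{\overline{Kx_j},x_j}\cdots S_{\overline{Kx_j^{m_j-1}},x_j}$ of equation \ref{eq:elliptic}, and $\tau(K'x_j^{m_j}K'^{-1})$ is precisely its cyclic rotation by one letter, since $\overline{K'x_j^{q}}=\overline{Kx_j^{q+1}}$ and $\overline{Kx_j^{m_j}}=K$. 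Because the orbit of $K$ under $K\mapsto\overline{Kx_j}$ has size equal to the order of $\phi(x_j)$, which is $m_j$ by the order-preservation property recorded in section \ref{sec:notation} (as you correctly cite), one relator per orbit suffices and there are $n/m_j$ orbits, giving the term $\Sigma_{i=1}^r n/m_i$.

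One correction to your closing worry: the step you call the hard part is not hard. A relator that is conjugate, in the ambient free group on the $S$-symbols, to an already-listed relator lies in its normal closure and is therefore automatically derivable; conjugacy is all that is needed. The only genuine care required is the $M$-symbol bookkeeping you already describe, so your proof is complete essentially as sketched. (Your aside that the $n$ relations $\tau(KRK^{-1})$ are irredundant because $R$ is not a proper power is unnecessary for the corollary, which asserts a presentation with the stated number of relations, not a minimal one.)
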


We refer to the $\tau(KRK^{-1})$ relations as the $R$-relations, the $\tau(Kx_j^{n_j}K^{-1})$ as the elliptic or $E$-relations and the $S_{M,v}$ as the $M$-relations. 

We have $2g_0n + nr$ generators, which we refer to as the Schreier generators and further distinguish between the hyperbolic Schreier generators and the elliptic Schreier generators, meaning the images of hyperbolic or elliptic elements of $\Gamma_0$. The Schreier generators are, of course, all hyperbolic transformations but we sometimes refer to these as $H$-Schreier,  $E$-Schreier and $N$-Schreier generators
 or $H$-generators, $E$-generators and $M$-generators for short.
 There are $n$  $R$-relations,  $nr$ elliptic relations,  and $n-1$ $M$-relations.

We note that the action of $G$ or $\Gamma_0$ on $\Gamma$ is given by conjugation and if we set $\phi(K) = g_K$, then
for any word $W$ in $\Gamma$.\

\begin{eq} \label{eq:theGaction}
$g_K(W) = \tau(KWK^{-1})$
\end{eq}

This will be used in section \ref{sec:Gaction}.


Our next goal is to show that one can perform Teitze transformations to obtain a presentation for $\Gamma$ with $2g$ generators and one relation that involves each generator and its inverse exactly once.
\vskip .05in
\vskip .1in
\part{Reduction and Elimination } \label{part:TWO}

\section{Reduction and Elimination Procedures \label{sec:redElimProc}}



\vskip .05in

We use elimination procedures to find a presentation of $\Gamma$ with $2g$ generators and one relation where each generator and its inverse occur exactly once.  The types of elimination procedures are defined below and are termed respectively $M$-elimination, elliptic elimination and gluing  elimination, for short $M$,$E$ and $Gl$ elimination. $E$ and $Gl$ eliminations are Teitze transformations of type $T4$ in the terminology of \cite{MKS} (See page 50.)

The order in which we use the different type of elimination steps will be described shortly as well as the count for the number of generators and relations remaining after given combinations of steps. First we choose a special set of coset representatives.


\section{Choosing the  Schreier coset representatives}

We could use what is known as {\sl short-lex order} on the elements of $\Gamma_0$ and choose for coset representatives the $n$ shortest words with this order. We {\sl modify}  this by omitting the $x_i^{-1}$ from our list when  $x_i^{m_i} =1$, as $x_i^{-1}=x_i^{m_i-1}$ will appear.

We carry out the following steps:

\begin{enumerate}

\item After applying an automorphism of $\Gamma_0$, we may assume that the orders of the elliptic elements satisfy $m_1 \le m_2 \le \cdots \le m_r$.
\item We order the $2g_0+r$ generators and the inverses of the hyperelliptic generators as follows:
$$ x_1 <  x_2<   ...<  x_r < a_1 <  a_1^{-1}< a_2< a_2^{-1}< \cdots< a_{g_0} < a_{g_0}^{-1} < b_1< b_1^{-1}<  \cdots < b_{g_0}<b_{g_0}^{
-1}.$$

\item We impose lexicographical order on the generators and choose for the coset representatives $K$, the $n$ first elements.
\end{enumerate}

\subsection{Elliptic Elimination}
If $x$ denotes an elliptic generator of order $m$, then the equation that governs elliptic is

\begin{eq} \label{eq:elliptic} $\tau(x^m) = S_{1,x}S_{{\overline{x}},x} \cdots
S_{{\overline{x^{m-2}}},x}S_{{\overline{x^{m-1}}},x}$.
\end{eq}

We can solve for $S_{{\overline{x^{m-1}}},x}$ and replace it wherever it occurs in an $R$-relation by $(S_{1,x}S_{{\overline{x}},x} \cdots
S_{{\overline{x^{m-2}}},x})^{-1} =1$.
This eliminates one generator and one relation and replaces an $R$-relation by a modified relation. For ease of exposition we do not use a separate notation for the modified $R$-relation.

Since for the images of every hyperbolic Schreier generator,  every generator and its inverse occurs exactly once in the set of original $R$-relations, the same is true for the remaining generators and the modified $R$-relations. However, we also have that every remaining $E-$Schreier generator and it inverse occurs.

Elliptic elimination will reduce the number of generators by $\Sigma_{i=1}^r {\frac{n}{m_i}}$ and the number of relations to $n$.

We note that the images of a given  $x_t$ that appear in the $\tau(KRK^{-1})$ are in distinct $r$-relation and that exactly one appears in each $R$-image.

\subsection{Elimination of $R$-relations: Gluing} \label{section:Relimination}

\begin{defn} \label{defn} {\rm (Gluing)}  Let $R_1$ and $R_2$ be relations in $\Gamma$. If we have $R_1 = W_1XW_2$ and $R_2 = V_1X^{-1}V_2$,  then we can solve for $X$ as $(V_1^{-1}R_2V_2^{-1})^{-1}$, eliminate $X$ from the list of generators and replace
$R_1$ by the relation $W_1V_2R_2^{-1}V_1W_2$ where $V_1, V_2, W_1, W_2$ are words in the Schreier generators for the kernel $\Gamma$  and no pair of these four words contain  a common Schreier generator. This process is termed 
{\sl sewing $R_1$ and $R_2$ along $X$}
or {\sl gluing $R_1$ and $R_2$ along $X$.}\end{defn}
 A gluing operation eliminates  $X$ and $X^{-1}$. Thus it eliminates one $R$-relation and one Schreier generator. We call $X$ a {\sl gluing generator}   and say that we {\sl have glued along $X$}.
\subsection{$M$-elimination}

Since the $M$-generators are $\approx 1$, they simply drop out of the elimination procedure. However, we will at some point to identify more precisely where and what they are. A procedure for this is given in section \ref{sec:Mremove}.

We also  note that the $M$-generators can be found from the spanning tree of the Cayley diagram.
\subsection{The Order for the Eliminations}

We use the following order:
\begin{description}

  \item[Step 1] Carry out an {\sl elliptic elimination} for those of the ${\frac{n}{m_i}}$ elliptic relations for each of the elliptic relations, $i=1,...,r$.
  \item[Step 2] Carry out gluing $n-1$ times as described below.
\item[Step 3] Replace every $M$-generator by $1$ in our calculations.

\end{description}

{\sl Gluing elimination order:}
Begin with $\tau(R) = R_1$ and consider the first element $Q$ that appears in $R_1$. We let $R_2$ be the relation in which $Q^{-1}$ occurs and glue $R_1$ and $R_2$ along $Q$ to obtain $\tilde{R_1}$. We consider the first element $\tilde{Q}$ of $\tilde{R_1}$ and locate $R_3$ the relation where $\tilde{Q}^{-1}$ occurs and sew $\tilde{R_1}$ and $R_3$ along $\tilde{Q}$ and continue until we have one relation left.
We assume that the gluing is not along an $M$-generator and prove below (Lemma \ref{lem:enough}) that there are enough generators to justify this assumption.

\begin{remk} \label{rem:refinement}{\sl Refinement for later refinement we can :}
 begin with $R_1$ and find the first $S_{1,v} \ne 1$ that occurs and consider that $S_{K,v}^{-1}$ that also occurs. Find the coset representative ${\tilde{K}}$ where $S_{K,v}$ occurs in $\tau({\tilde{K}}{\tilde{K}}^{-1})$ and glue this to $R$. Continue until this closes up. The number of copies will be some integer $w$ that divides $n$, the order of $G$. When we have repeated this until there is one relation left, the product of the number of relations glued at each step will be $n$.
\end{remk}

\subsection{Gluing Cycles}
 We consider the generators for $\Gamma$ after elliptic elimination and $M$-elimination and the $n$ modified $R$-relations.

We can glue in a manner analogous to the determination of cycle conditions in the Poincare  Polygon theorem \cite{Beard, Maskit}. Namely,begin with $\gamma = S_{1,v_1} \in \tau(R)$ which has not been deleted, $v_1$ some generator of $\Gamma_0$.  Let $K$
be such that $S_{1,v_1}^{-1}$ appears in $\tau(KRK^{-1})$ and let the other image of $S_{1,v_1}$ that appears in $\tau(KRK^{-1})$ be $S_{L,a}$ where $L$ is one of the coset representatives. Write $\gamma=S_{1,v_1}$ and let $h_1 \in G$ be the element with $h_1(\gamma)=S_{L,v_1}$.

Find $h_2(\gamma)$ in the same manner. Obtain is a sequence $h_1, h_2 \cdots,  h_t$ of elements of $G$ such that $h_1 \cdot h_t(\gamma) = \gamma$. We have also obtained a corresponding cycle of $R$-relations. We glue along this sequence to obtain $\tilde{R}$. We have $t= m_i$ for some integer $m_i$.  The there are coset ${\frac{n}{m_i}}$ coset representatives $J$ where the $\tau(J\tilde{R}J^{-1}$ which are all distinct. $g_K \ne h_i$  for any $i$ we obtain and also along the images of these. That is we take $k_1$ a coset representative such that $\tau(k_1Rk_1^{-1})$ is not in the cycle of $R$-relations. We note if $\phi(a) = m$, then we have ${\frac{n}{m}}$ glued relations left. We pick the next $v$ such that $S_{{\tilde{K}},v}$ appears in $\tau(R)$,  $\langle \phi(v) \rangle \cap \langle \phi(\gamma) \rangle = \emptyset$ and  apply the procedure to the ${\frac{n}{m}}$ clusters.
If we repeat this procedure $z$ times each gluing $m_z$ copies of $R$-clusters, we must have $\Pi_{i=1}^z m_i= n$ and we are done.

Note each $m_i$ divides $n$. Need to show there are enough such $v$ left. This requires a coset argument.

It follows from the fact at each step we remove the inverses that occur that cycle gluing eliminates ${\frac{n}{m_i}}$ generators.

We have \begin{lem}
Gluing along cycles reduces the $n$ relations to one relation where every generators and its inverse occurs. If $v$ is an E-Schreier generator of order $m$, $S_{1,v_1}$ all of its images $S_{{\overline{Kv_1^q}},v_1},  q=1,..., m-1$ occur. If $v$ is an $H$
-Schreier generator, with $(\phi(v))^m =1$,
all of its images $S_{{\overline{Kv_1^q}},v_1} q=1,..., m-1$ which are not $M$ generators or among the eliminated $E$-generators occur.\end{lem}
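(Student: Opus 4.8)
The plan is to prove all three clauses at once by reading the gluing process off the Cayley (Schreier) graph of $G$ and the covering $\pi\colon S\to S_0$. First I would fix the dictionary: the $n$ cosets are the vertices of the graph, each Schreier generator $S_{K,v}$ is the directed edge from $\phi(K)$ to $\phi(Kv)$, and, as recorded in Section \ref{sec:Mremove}, the $M$-generators are precisely the $n-1$ edges of the chosen spanning tree. By Theorem \ref{thm:SR} each relation $\tau(KRK^{-1})$ is the boundary word of one of the $n$ lifts of the fundamental polygon of $S_0$, so gluing two $R$-relations along a generator $X$ in the sense of Definition \ref{defn} is literally the identification of these two lifted polygons along the common edge $X$. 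Under this dictionary the assertion becomes the statement that the $n$ polygons assemble into a single fundamental polygon for the closed surface $S$, and I would check each clause against this model.

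For the first clause I would argue by connectivity. Since $\phi$ is onto, $G$ is generated by the $\phi(v)$, so the Cayley graph is connected, and hence the adjacency graph whose nodes are the $n$ $R$-relations and whose edges are the non-$M$ Schreier generators is connected as well. Following the Gluing Cycles construction above, I would show that starting from $\gamma=S_{1,v_1}$ and repeatedly passing to the relation containing the inverse produces a cycle of $R$-relations whose length is the order $m_i$ of $\langle\phi(v_1)\rangle$, and that the full procedure groups the relations into clusters of sizes $m_1,\dots,m_z$ with $\prod_i m_i=n$. Because each gluing removes exactly one matched pair $X,X^{-1}$, the number of relations telescopes $n\to n/m_1\to\cdots\to1$ after exactly $n-1$ gluings, which is the claim that a single relation survives. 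Here I would invoke Lemma \ref{lem:enough} to guarantee that at each step a non-$M$ generator is available, so that no gluing is forced along a tree edge.

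That the surviving relation contains every remaining generator together with its inverse I would deduce from two features preserved under gluing. In $R$ each hyperbolic generator $a_i,b_i$ occurs once with exponent $+1$ and once with $-1$ (the commutator structure), while each elliptic generator occurs once; applying $\tau$ and the action formula \ref{eq:theGaction} therefore makes each hyperbolic Schreier generator and its inverse appear across the $R$-relations, and a gluing deletes only the single pair along which one sews, leaving all other pairs intact. Equivalently, since the assembled object is a fundamental polygon of the smooth closed surface $S$, its sides are paired, so every remaining generator and its inverse occur exactly once. For the two counting clauses I would analyze the cyclic orbit structure of the coset action of $\langle\phi(v)\rangle$: for an elliptic $x_j$ of order $m$ the edges fall into $n/m$ cycles, each cycle being one elliptic relation \ref{eq:elliptic}, and elliptic elimination deletes precisely one edge of each cycle, so the $m-1$ images $S_{\overline{x_j^{q}},x_j}$ with $q=1,\dots,m-1$ persist; for a hyperbolic $v$ with $\phi(v)^m=1$ the same cyclic orbits occur, but now exactly those edges lying in the spanning tree (the $M$-generators) or already consumed by elliptic elimination drop out, and the remaining images survive. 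As a consistency check I would count survivors using Corollary \ref{cor:numbers}: the total is $2g_0n+nr-(n-1)-\sum_i n/m_i-(n-1)$, which equals $2g$ by the Riemann--Hurwitz relation \ref{eq:RH}, confirming that the single relation is a genuine genus-$g$ surface relator.

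The main obstacle is the bookkeeping of the cycle procedure: proving that each cycle closes after exactly $m_i$ steps and that the cluster sizes multiply to $n$, so that the relation count honestly telescopes to one, while simultaneously controlling the interaction with $M$-elimination so that one is never forced to glue along a tree edge and so that setting the $M$-generators equal to $1$ does not destroy the ``each generator and its inverse once'' property. This is exactly where Lemma \ref{lem:enough} and the orbit analysis of the $G$-action on cosets carry the weight of the argument.
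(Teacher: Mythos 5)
Your proposal is correct in outline, and it reaches the lemma by a genuinely different route from the paper. The paper stays entirely inside the Schreier--Reidemeister formalism: it constructs cycles of $R$-relations by repeatedly locating the relation in which the inverse of the chosen generator occurs, glues along each cycle, and argues that the resulting cluster sizes $m_1,\dots,m_z$ multiply to $n$, with Lemma \ref{lem:enough} guaranteeing that generators remain available; the $G$-action is carried along explicitly via $g_K(W)=\tau(KWK^{-1})$, which is precisely the data Part \ref{part:THREE} later consumes. You instead translate everything into the covering picture: the $R$-relations of Theorem \ref{thm:SR} become boundary words of the $n$ lifts of the fundamental polygon of $S_0$, the $M$-generators become the spanning-tree edges of the Schreier graph, and gluing in the sense of Definition \ref{defn} becomes identification of polygons along a shared edge. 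That dictionary buys cleaner justifications of the two points the paper leaves implicit: termination in a single relation follows from connectedness of the dual graph of a tiling of the connected surface $S$ (note that this tiling argument, not connectivity of the Cayley graph per se, is what really justifies your adjacency claim, since deleting the spanning-tree edges could a priori disconnect a graph-theoretic adjacency structure), and the claim that every surviving generator and its inverse occurs follows from the side-pairing of a closed surface's fundamental polygon; your Riemann--Hurwitz cross-check against \ref{eq:RH} that exactly $2g$ generators survive is a worthwhile addition. The cost is that the explicit action bookkeeping, which the paper's algebraic procedure produces as a by-product, must be reconstructed afterwards. Finally, be aware that you and the paper defer the same combinatorial core --- that each cycle closes after exactly $m_i$ steps, that the cluster sizes multiply to $n$, and that one is never forced to glue along a tree edge --- to Lemma \ref{lem:enough} and an orbit/coset argument; the paper itself only gestures at this (``this requires a coset argument''), and you are at least explicit that this is where the weight of the proof lies.
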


We need a lemma to show that we have enough generators remaining after $M$ and $E$ elimination to be able to carry out the needed $n-1$ eliminations. This follows from Lemma \ref{lem:enough} below.

\subsection{$M$-elimination} \label{sec:M-elim}

The $M$-Schreier generators simply disappear from the relations, but we will need to keep track of where they occur, how many of them there are, and whether they are elliptic Schreier generators or hyperbolic ones. In section \ref{sec:Mremove} below we present a procedure for carry out the elimination of $M$-generators which allows us to determine the action of $G$ on remaining generators.

\section{Removing the $M$-generators} \label{sec:Mremove}

We note that the $M$-generators can all be found by considering the spanning tree of the Cayley graph. Here we present a procedure for what we term the  {\sl $M$-removal} of the $M$-generators.
First, we consider the $E$-Schreier generators that are $M$-generators.

We use the notation $S_{*,v}$ when the first subscript is a coset representative which we do not need to specify.
In $\tau(KRK^{-1})=1$ let $K = w_1 \cdots w_t$.

Then the generators $S_{*,w_j}^{\pm 1}$ that appear in the image of $\tau$ are $M$-generators, so they drop out of the expression and the image under $\tau$ ends with an $S_{*,x_r}$.

If any for any of the $*$  these words are $M$-generators,  they are removed and we can solve for $S_{*,x_{r-1}}$.

For those which are not $M$-words we solve in terms of the remaining generators. Those which are $M$-generators will drop out and we will solve for $S_{*,x_{r-2}}$.

We continue the procedure until we obtain a Schreier generators that is an $M$-generators. Reaching the image of $[a_{g_0},b_{g_0}]$ in the $\tau(KRK^{-1})$ does not present a problem for we consider the subword,

$S_{{\tilde{K}},a_{g_0}}S_{{\overline{{\tilde{K}}a_{g_0}}},b_{g_0}}
S_{{\overline{{\tilde{K}}a_{g_0}b_{g_0}a_{g_0}^{-1}}},
a_{g_0}}^{-1}S_{{\overline{{\tilde{K}}[a_{g_0},b_{g_0}]}},b_{g_0}}^{-1}$
where ${\tilde{K}}$ is the appropriate coset representative.

We can solve for $S_{{\overline{{\tilde{K}}[a_{g_0},b_{g_0}]}},b_{g_0}}^{-1}$ 
and its images under the group and omit those which are $M$-words and replace those which are not $M$-words by their solutions. We can compute the action of $G$ as needed.
We continue until all $M$ words have been eliminated.

At the end of this procedure, we now have exactly $2g$ generators and a $2g \times 2g$ matrix. The action of$G$ on the rows from which the $M$ generators  have been removed, which we term {\sl $M$-rows} of the matrix is more complicated than in the rows of the adapted generating set,  but only affect at most $n$ lines. This is because some $M$-rows may come from removing more than one $M$-generator.
That is,  we obtain a $2g \times 2g$ matrix with at most $n$ lines altered.

Sometimes computing the $2g \times 2g$ matrix from the $(2g + n) \times (2g+n)$ matrix is easier, for example when there is a maximal power Schreier generator. A maximal power generator is one for which
${\overline{v^q}}=v^q \forall q with 1 \le q \le m_{v-1}$,
 where we define $m_v$ to be the order of $\phi(v)$ whether $v$ is elliptic or hyperbolic. (See section \ref{sec:easier})


\begin{remk} Further, if there are no elliptic generators with the same cyclic group generated by the image as a hyperbolic generator $v$ we will see that this will also give us two words in the kernel that are fixed under the element of $G$, namely $\phi(v)$.
\end{remk}

\subsection{The Count} \label{sec:thecount}

\begin{lem} The elimination procedure leaves $2g$ generators.
\end{lem}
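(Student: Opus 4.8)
The plan is to prove the statement by a careful bookkeeping argument: count the generators we start with and subtract the number eliminated at each stage, then show the remainder equals $2g$ via the Riemann--Hurwitz relation \eqref{eq:RH}. By Corollary \ref{cor:numbers}, the Schreier-Reidemeister presentation begins with $2ng_0 + nr$ generators. The three elimination procedures (elliptic, gluing, and $M$-elimination) each remove a controlled number of these, so I would track the running total through Steps 1--3 of the elimination order.

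First I would account for the elliptic eliminations. As recorded in the elliptic elimination subsection, Step 1 removes $\Sigma_{i=1}^r \frac{n}{m_i}$ generators, since for each elliptic generator $x_i$ of order $m_i$ we solve for $S_{\overline{x_i^{m_i-1}},x_i}$ in each of the $\frac{n}{m_i}$ distinct elliptic relations. Next I would account for gluing: Step 2 performs gluing $n-1$ times, and since each gluing operation eliminates exactly one Schreier generator (the gluing generator $X$, together with $X^{-1}$), this removes $n-1$ generators. The subtlety here is that we must know these $n-1$ gluings can actually be carried out without gluing along an $M$-generator; this is exactly what Lemma \ref{lem:enough} guarantees, so I would invoke it to justify that the count is legitimate rather than merely formal.

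The third contribution comes from $M$-elimination. The $M$-generators are those $S_{M,v}$ with $Mv = \overline{Mv}$, and there are $n-1$ of them (one fewer than the $n$ coset representatives, as stated in the $M$-relations count). Setting each $M$-generator equal to $1$ removes these $n-1$ generators as well. Collecting the three contributions, the number of remaining generators is
\begin{equation*}
2ng_0 + nr - \sum_{i=1}^r \frac{n}{m_i} - (n-1) - (n-1).
\end{equation*}
It remains to verify this equals $2g$. Rewriting $nr - \sum \frac{n}{m_i} = \sum_{i=1}^r n\left(1 - \frac{1}{m_i}\right)$ and substituting the Riemann--Hurwitz relation \eqref{eq:RH}, which gives $n\sum_{i=1}^r\left(1-\frac{1}{m_i}\right) = (2g-2) - n(2g_0 - 2)$, the expression collapses to $2g$ after the $+2$'s from the two $(n-1)$ terms cancel against the constant terms in Riemann--Hurwitz.

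The main obstacle I anticipate is not the arithmetic, which is routine once the counts are lined up, but rather ensuring the gluing and $M$-elimination counts do not overlap or double-count a generator. Specifically, one must confirm that the $n-1$ generators glued away in Step 2 are genuinely distinct from the $n-1$ $M$-generators removed in Step 3 and from the elliptic generators removed in Step 1; the ordering of eliminations and the assumption (justified by Lemma \ref{lem:enough}) that gluing avoids $M$-generators is precisely what prevents this collision. I would therefore emphasize that the three removal counts are disjoint, so that simple subtraction is valid, and then present the Riemann--Hurwitz substitution as the concluding computation.
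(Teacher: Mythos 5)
Your proposal is correct and takes essentially the same route as the paper: start from the $2ng_0+nr$ Schreier generators of Corollary \ref{cor:numbers}, subtract $\Sigma_{i=1}^r \frac{n}{m_i}$ for elliptic elimination, $n-1$ for $M$-elimination and $n-1$ for gluing, invoke Lemma \ref{lem:enough} to justify that the gluings can be performed, and confirm the total is $2g$ via Riemann--Hurwitz. Your write-up is in fact tidier than the paper's (whose displayed formulas contain typos and leave the Riemann--Hurwitz substitution implicit), but it is the same argument.
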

\begin{proof}
If there are no elliptic generators with ${\overline{x^{m-1}}} = x^{-1}$, then elliptic elimination removes $\Sigma_{i=1}^r {\frac{n}{m_i}}$ generators. After elliptic elimination we have
$2ng_0 + nr - n\Sigma_{i=1}^r {\frac{n}{m_i}}$. We remove a further $n-1$ $M$-generators and glue along $n-1$ generators. We are left with $2ng_0 + nr - n\Sigma_{i=1}^r {\frac{n}{m_i}}$. This is precisely $2g$ generators.

We need to check that after elliptic elimination and $M$-elimination, there are still enough generators to remove $n-1$ more.

\end{proof}

\begin{lem} \label{lem:enough} {\rm (Enough Generators)}
There enough generators for $\Gamma$ to allow elimination of another $n-1$ generators after elliptic $E$-elimination and $M$-elimination have been carried out.
\end{lem}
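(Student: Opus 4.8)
Looking at this statement, I need to prove that after elliptic elimination and $M$-elimination, there remain enough generators in $\Gamma$ to allow the elimination of another $n-1$ generators via gluing.

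The plan is to proceed by a careful counting argument combined with a coset-theoretic observation. First I would tally the generators after each reduction stage. We begin with $2ng_0 + nr$ Schreier generators. Elliptic elimination removes $\sum_{i=1}^r \frac{n}{m_i}$ generators (one for each elliptic relation, via equation~\ref{eq:elliptic}), and $M$-elimination removes $n-1$ generators (the $M$-relations $S_{M,v}=1$). Using the Riemann-Hurwitz relation~\ref{eq:RH}, the remaining count $2ng_0 + nr - \sum_{i=1}^r \frac{n}{m_i} - (n-1)$ should simplify to $2g + (n-1)$, so that after the $n-1$ gluings we land at exactly $2g$. The arithmetic here is the easy part and essentially reproduces the count in the preceding Lemma.

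The substance of the claim is not the raw count but the assertion that the $n-1$ gluings can actually be carried out \emph{without gluing along an $M$-generator}, as required by the gluing elimination order. I would argue as follows. The gluing process connects the $n$ modified $R$-relations into a single relation; think of the $R$-relations as vertices of a graph and each non-$M$ hyperbolic Schreier generator $X$ (with its inverse $X^{-1}$ appearing in two distinct $R$-relations) as an edge. To reduce $n$ relations to one by gluing we need a spanning tree, i.e.\ $n-1$ edges, and these edges must be genuine (non-$M$) generators. So the real task is to show this graph on $n$ vertices is connected through non-$M$ edges. The key observation is that the $M$-generators correspond precisely to edges of a spanning tree of the Cayley graph of $G$ (as noted in the $M$-elimination discussion), while the surviving hyperbolic Schreier generators $S_{K,a_i}, S_{K,b_i}$ provide additional connectivity among the cosets/relations. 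Since $g_0 \ge 1$ or $r$ is large enough to force genus $g \ge 2$, there are strictly more hyperbolic generators than the $n-1$ needed, and the cycle-gluing analysis (the preceding Lemma on gluing along cycles) shows each gluing cycle for a generator $v$ with $|\phi(v)|=m$ consumes $\frac{n}{m}$ generators and closes up a cluster of $R$-relations; iterating over independent generators whose images generate $G$ connects all $n$ relations.

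Concretely, I would phrase the core step as: choose the gluing generators greedily following the prescribed order, and at each stage verify a surviving non-$M$ hyperbolic generator connecting two as-yet-unglued relation-clusters exists. This reduces to checking that the images $\phi(v)$ of the hyperbolic generators generate $G$ (equivalently, that the generating vector generates $G$, which holds since $\phi$ is surjective), so the cosets cannot be partitioned into blocks disconnected by non-$M$ edges. The main obstacle will be making this connectivity argument fully rigorous rather than heuristic: one must rule out the degenerate possibility that every non-$M$ generator linking two particular clusters has been exhausted before connectivity is achieved. I expect to handle this by comparing the total supply of non-$M$ hyperbolic generators, which the count shows exceeds $n-1$, against the $n-1$ demanded, and invoking the surjectivity of $\phi$ to guarantee the Cayley-graph connectivity that underlies the coset argument flagged as ``needed'' in the gluing-cycles discussion.
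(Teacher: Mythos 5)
Your counting is correct and matches the paper's: after elliptic elimination and $M$-elimination there are $2g+n-1$ generators left, so numerically there is room to remove $n-1$ more. But the core of your argument --- the connectivity claim --- contains a genuine gap, and it is exactly at the point you yourself flag as the ``main obstacle.'' You propose to get Cayley-graph connectivity from the assertion that the images $\phi(v)$ of the \emph{hyperbolic} generators generate $G$, ``equivalently, that the generating vector generates $G$, which holds since $\phi$ is surjective.'' These are not equivalent: the generating vector includes the elliptic images $\phi(x_1),\dots,\phi(x_r)$, and surjectivity of $\phi$ says only that the hyperbolic and elliptic images \emph{together} generate $G$. In the extreme case $g_0=0$ (which the paper must cover --- see its worked example with $G=\mathbb{Z}_5\times\mathbb{Z}_5$ acting with quotient of genus $0$) there are no hyperbolic generators at all, so your statement that ``there are strictly more hyperbolic generators than the $n-1$ needed'' is false, and every gluing must be performed along surviving elliptic Schreier generators. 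Even for $g_0\ge 1$ the hyperbolic images alone need not generate $G$, so the spanning-tree/connectivity argument as you set it up does not go through.

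The paper's own proof takes a different and more elementary route: it is purely a count, done by case analysis on $g_0$, using the Riemann--Hurwitz relation together with the existence (positive coarea) condition for the Fuchsian group $\Gamma_0$. For $g_0\ge 2$ the inequality $2+n(2g_0-3)\ge n-1$ holds outright; for $g_0=0$ the condition $-2+\Sigma_{i=1}^r(1-\frac{1}{m_i})>0$ forces $r\ge 3$, so enough surviving elliptic Schreier generators remain to glue along; for $g_0=1$ the same condition forces $r\ge 1$. In other words, the paper resolves precisely the degenerate cases on which your argument breaks, by exploiting the constraint that $(g_0;m_1,\dots,m_r)$ must be a hyperbolic signature, rather than by a graph-connectivity argument. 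If you want to pursue your graph-theoretic formulation (which is a reasonable way to make the gluing order rigorous), you would need to let the edges be \emph{all} surviving non-$M$ Schreier generators, elliptic as well as hyperbolic, and then use surjectivity of $\phi$ on the full generating vector --- not just its hyperbolic part --- to obtain connectivity.
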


\begin{proof}

We have Riemann Hurwitz formula again:
$$2g-2 = n(2g_0 -2) + n \cdot \Sigma_{i=1}^r (1- {\frac{1}{m_i}}).$$
We have eliminated the correct number for the elliptic generators and $M$ elimination: that is  $n \cdot \Sigma_{i=1}^r (1- {\frac{1}{m_i}}) + n-1$ or $n \cdot \Sigma_{i=2}^r (1- {\frac{1}{m_i}}) + n-1$.
depending upon whether  an elliptic or hyperbolic generator has been removed.

We want to show the remaining $2g-n \cdot \Sigma_{i=1}^r (1- {\frac{1}{m_i}}) = 2 + n(2g_0 -2) \ge 2n-1$.
 Since we have also removed $n-1$ of the $M$-generators, we need $2 + 2ng_0 -2n -n \ge n-1$
or  $2 + n(2g_0) -3) \ge n-1$.
 If $g_0 \ge 2$, $2g_0-3 \ge  1$, we are OK.  
 If $g_0=0$, for there to exist a Fuchsian group of genus $0$ with $r$ elliptic elements,it must be that  $-2 +\Sigma_{i=1}^r(1-{\frac{1}{m_i}}) >0$.
Since $m_i \ge 2$, it must be that $r \ge 3$ and the needed equation holds.
If $g_0 =1$,  we must have $r \ge 1$ because we must have $\Sigma_{i=1}^r(1-{\frac{1}{m_i}}) >0$ for such a Fuchsian group to exist, again the needed equation holds.

\end{proof}

We will later need to refine this count to see where the removed  $M$ generators occur so as to have more information about the action of $G$ on the kernel.

\begin{lem} \label{lem:beta-alphaNOTequal}
After applying an automorphism if necessary, we may assume $\forall i=1,..., g_0, {\overline{\alpha_i}} \ne {\overline{\beta_i}}$. Then each $S_{K,a_i}$ and its inverse occur in different $\tau(KRK^{-1})$ where $K$ runs over all coset representatives, $K_i, i=1, ..., n$.
In a similar manner we may assume applying an automorphism is necessary that in the gluing process we do not need to use a commutator in which $\phi(b)=1$
\end{lem}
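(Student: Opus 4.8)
The plan is to split the statement into an explicit computation with the rewriting map $\tau$ and a realizability step carried out by automorphisms of $\Gamma_0$, proving the two assertions in that order.

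First I would locate the two occurrences of each $a_i$-generator. Fix $i$ and look at the block $[a_i,b_i]=a_ib_ia_i^{-1}b_i^{-1}$ inside $\tau(KRK^{-1})$. If $g$ denotes the image under $\phi$ of the prefix preceding this block, namely $\phi(K)\prod_{j<i}[\phi(a_j),\phi(b_j)]$, then the rewriting rule sends the positive letter $a_i$ to $S_{\overline{g},a_i}$ and the letter $a_i^{-1}$ to $S_{\overline{g\,\phi(a_i)\phi(b_i)\phi(a_i)^{-1}},a_i}^{-1}$. Hence inside one relation the positive and negative $a_i$-occurrences carry first subscripts $\overline{g}$ and $\overline{g\,\phi(a_i)\phi(b_i)\phi(a_i)^{-1}}$, and these agree exactly when $\phi(b_i)=1$, that is exactly when $\overline{\beta_i}$ is the trivial representative. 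Since each fixed generator $S_{K,a_i}$ occurs positively once and negatively once as $K$ ranges over the coset representatives, it follows that when $\phi(b_i)\neq1$ the positive occurrence of $S_{K,a_i}$ and the occurrence of $S_{K,a_i}^{-1}$ must lie in different $R$-relations, since having both subscripts equal in a single relation is impossible. So I would record that the genuine content of the first claim is the separation $\overline{\beta_i}\neq\overline{1}$, to be arranged alongside $\overline{\alpha_i}\neq\overline{\beta_i}$.

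Next I would arrange the condition by the two handle Dehn twists, which as automorphisms of $\Gamma_0$ are given on the $i$th handle by $(a_i,b_i)\mapsto(a_i,b_ia_i)$ and $(a_i,b_i)\mapsto(a_ib_i,b_i)$. Each fixes the word $[a_i,b_i]$, hence fixes $R$ and leaves the other handles and every $x_j$ alone, and composing with $\phi$ keeps it onto $G$, so we stay in the same class. Writing $A=\phi(a_i)$, $B=\phi(b_i)$, these act as $(A,B)\mapsto(A,BA)$ and $(A,B)\mapsto(AB,B)$. Provided the handle is nontrivial, a short sequence of these moves reaches a pair with $B\neq1$ and $A\neq B$: from $(A,1)$ with $A\neq1$ one passes to $(A,A)$ and then to $(A,A^2)$, and in the order-two exception to $(1,A)$ instead. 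This is a finite check on the orders of $A$ and $B$, and performing it on each handle gives the normalization for every $i$ whose handle is nontrivial.

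The gluing statement then follows from the first part together with Lemma \ref{lem:enough}: after the normalization any commutator one might glue along has $\phi(b_i)\neq1$, so its generators are genuine gluing generators, and Lemma \ref{lem:enough} supplies enough of them after $E$- and $M$-elimination to perform all $n-1$ gluings without resorting to a commutator with $\phi(b)=1$. The hard part is the trivial handle $\phi(a_i)=\phi(b_i)=1$: the handle-internal twists fix $(1,1)$, so to obtain the condition for \emph{every} $i$ one must invoke cross-handle automorphisms, namely handle slides, braidings of two handles, and slides of a handle over a cone point $x_j$, that redistribute the generating vector while respecting $x_j^{m_j}=1$ and the product relation. Showing that such automorphisms of the Fuchsian group $\Gamma_0$ exist and can always remove a trivial handle, equivalently that the relevant class contains a vector with no trivial handle, is the step I expect to be the main obstacle, and I would reduce it to the nontrivial-handle case already treated.
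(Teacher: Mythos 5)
Your computation of where the two occurrences of each $a_i$-generator land is correct and in fact sharper than what the paper writes down: the positive occurrence carries first subscript $\overline{K\prod_{j<i}[a_j,b_j]}$ and the negative one $\overline{K\prod_{j<i}[a_j,b_j]\,a_ib_ia_i^{-1}}$, so coincidence inside a single $R$-relation happens exactly when $\phi(b_i)=1$; this matches the paper's criterion $S_{K,a}=S_{\overline{Kaba^{-1}},a}$, and your observation that the hypothesis actually needed is $\phi(b_i)\neq 1$ (alongside the stated $\overline{\alpha_i}\neq\overline{\beta_i}$) is a genuine clarification. Your intra-handle twist argument (the automorphisms $U_i$ and $B_i$ in the paper's table in section \ref{sec:autos}) also correctly settles every handle on which at least one of $\phi(a_i),\phi(b_i)$ is nontrivial, including the order-two exception.

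However, the case you explicitly defer --- the trivial handle $\phi(a_i)=\phi(b_i)=1$ --- is not a side issue; it is the decisive step of the lemma, and your proposal does not prove it. The twists $U_i$, $B_i$ fix the pair $(1,1)$, as you note, so no amount of intra-handle work helps, and such handles genuinely occur: nothing in the definition of a surface-kernel map forbids a handle whose two generators both map to $1$, provided the remaining entries of the generating vector generate $G$. The paper closes exactly this hole with the cross-handle and handle-to-cone-point automorphisms from the Broughton--Wootton list: when $r>0$ it uses $U_{i,j}$, which replaces $\beta_i$ by $\beta_i\cdot x\gamma_j x^{-1}$ and hence makes the new $\phi(b_i)$ a conjugate of $\phi(x_j)$, which is nontrivial because elliptic generators map to elements of their full order $m_j\ge 2$; when $r=0$ it uses $Z_i$, which mixes handle $i$ with handle $i+1$ and applies as long as $\overline{\beta_{i+1}}\neq\overline{\beta_i}$, surjectivity of $\phi$ guaranteeing a nontrivial neighboring handle from which to propagate. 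Your closing sentence (``reduce it to the nontrivial-handle case'') is the right instinct, but until you exhibit these cross-handle and cone-point moves, check that they are automorphisms of $\Gamma_0$ preserving the relations $x_j^{m_j}=1$ and $R$, and verify that they terminate with every handle nontrivial, the lemma is not established; as it stands the proposal has a genuine gap precisely where the lemma has its content.
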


\begin{proof}

 We note that for gluing that at each step the gluing  generators $U$ and $U^{-1}$ appear in different relations. Namely, if there is an element $U$ such that $UVU^{-1}\tilde{V}) = \tau(qbu^{-1}v^{-1})$ appears as a subword of $\tau(KRK^{-1})$ for some $q,b u,v
\in \Gamma_0$,  then $\phi(q)$ is conjugate to $\phi(v)$ in $G$ and we can apply an automorphism of $\Gamma_0$ to $\Gamma_0$ to assure that this is not the case. (See \cite{Harvey} or  \cite{BW} for lists of automorphism.)

Equivalently we may assume that $U$ and $U^{-1}$ do not appear in the same $\tau(KRK^{-1})$ for any $K$. This is clearly true for any elliptic Schreier generator. Consider all possible ways in which

$S_{K,a}=S_{{\overline{Kaba^{-1}}},a}$
or


$S_{K,b} = S_{{\overline{K[a,b]}},b}$

could occur for some $K$ and note that under any of the circumstances, no matter whether or not $G$ is abelian, we can apply an automorphism to $\Gamma_0$ to rule these out.
See the lists of automorphisms in \cite{Harvey1, Harvey2, BW}.

Again use the automorphisms list, if need be, to replace the generators by generators under the automorphism $Z_i$ as long as ${\overline{\beta_{i+1}}} \ne {\overline{\beta_i}}$ when $r=0$ otherwise use the automorphism $U_{ij}$.

\end{proof}

\begin{remk}
Using  $\gamma= S_{1,v} $ and its images under $G$ is equivalent to using than $S_{K,v}$ because as $K$ varies over all coset representatives $g_K(\gamma)$ varies over all $S_{K,v}$.
\end{remk}
\begin{remk}
While one can replace the final single relation by one involving  $2g$ generators for the fundamental group and the product of their commutators, we do not do so here. The bases and generating sets we find here  are not ones with that intersection matrices although it can be computed using the method initiated by D. Patterson \cite{GP,Jalg} and  later applied in extended circumstances by \cite{Rubi}.
\end{remk}

\section{Elimination Algorithm} \label{sec:elimalg}

We note that we have shown:

\begin{thm} \label{thm:Algorithm}
The application of the Schreier-Reidemeister theorem together with the elimination procedures and their orders of execution given above gives an algorithm whose output is $2g$ generators for $\Gamma$ and a single defining relation in which every generator and its inverse occurs.
\end{thm}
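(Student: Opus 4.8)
The plan is to assemble Theorem \ref{thm:Algorithm} as a bookkeeping argument: it asserts that the pipeline (Schreier--Reidemeister presentation, then the prescribed elimination steps in their prescribed order) terminates with exactly $2g$ generators and a single relation in which each generator and its inverse appears exactly once. Almost all of the hard analytic content is already packaged in the preceding lemmas, so the proof is really a verification that the three ingredients fit together. First I would invoke Theorem \ref{thm:SR} and Corollary \ref{cor:numbers} to record the starting data: $2ng_0 + nr$ Schreier generators together with the $R$-relations, the $E$-relations, and the $M$-relations, and the explicit count $n + (n-1) + \Sigma_{i=1}^r \tfrac{n}{m_i}$ of relations. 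This gives the initial state of the algorithm against which all subsequent deletions are measured.

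Next I would run the three elimination steps in the order fixed in section \ref{sec:redElimProc} and track the counts exactly as in Lemma \ref{lem:enough} and the counting lemma preceding it. Step 1 (elliptic elimination) removes $\Sigma_{i=1}^r \tfrac{n}{m_i}$ generators and leaves $n$ modified $R$-relations, as established by Equation \ref{eq:elliptic}. Step 2 (gluing, run $n-1$ times along the cycle scheme) removes $n-1$ generators and collapses the $n$ $R$-relations to a single relation. Step 3 ($M$-elimination) removes the $n-1$ $M$-generators. The arithmetic then reads: $2ng_0 + nr - \Sigma_{i=1}^r \tfrac{n}{m_i} - (n-1) - (n-1)$, and by the Riemann--Hurwitz relation \ref{eq:RH} this equals $2g$, exactly as computed in the counting lemma. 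The crucial point that the algorithm never stalls for lack of generators is precisely the content of Lemma \ref{lem:enough}, which I would cite rather than reprove.

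The remaining assertion is qualitative rather than numerical: that in the surviving single relation, every one of the $2g$ generators and its inverse occurs exactly once. Here I would argue that the property is preserved under each elimination step. In the original $R$-relations every hyperbolic Schreier generator and its inverse occurs exactly once (this is the ``balanced'' structure of $\tau(KRK^{-1})$ noted after Equation \ref{eq:elliptic}); elliptic elimination substitutes a word for the eliminated generator into a single $R$-relation and, as remarked, preserves the single-occurrence property for the remaining generators; gluing along $X$ by Definition \ref{defn} cancels one occurrence of $X$ against one of $X^{-1}$ in distinct relations, leaving every other generator's occurrence pattern untouched; and $M$-elimination simply deletes generators that are freely trivial. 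To guarantee that gluing always cancels a generator against its inverse lying in a \emph{different} relation (so that the operation is legitimate and the balance is maintained), I would appeal to Lemma \ref{lem:beta-alphaNOTequal}, which secures, after applying an automorphism of $\Gamma_0$ if necessary, that $X$ and $X^{-1}$ never sit in the same $\tau(KRK^{-1})$.

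The step I expect to be the main obstacle is the rigorous invariance argument in the last paragraph, specifically the interaction between gluing and the $M$-generators. Remark \ref{rem:refinement} and the gluing-cycle discussion assume the gluing is never performed along an $M$-generator, and the counting in Lemma \ref{lem:enough} is what licenses that assumption; but to conclude the clean single-occurrence statement one must verify that deleting $M$-generators after all the gluing does not disturb the balance, i.e.\ that no $M$-generator's removal leaves an unpaired occurrence of a surviving generator. I would handle this by treating Step 3 last and observing that $M$-generators are freely equal to $1$, so their deletion is a free reduction that removes symbols without altering the pairing of any non-$M$ generator with its inverse. Assembling these observations, the output has $2g$ generators and one relation with the stated occurrence property, which is exactly the assertion of the theorem.
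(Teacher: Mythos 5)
Your proposal is correct and follows essentially the same route as the paper, whose own ``proof'' of Theorem \ref{thm:Algorithm} is precisely the assembly you describe: the starting presentation from Theorem \ref{thm:SR} and Corollary \ref{cor:numbers}, the generator count via the elimination steps combined with the Riemann--Hurwitz relation \ref{eq:RH}, Lemma \ref{lem:enough} to guarantee the $n-1$ gluings can be performed, and Lemma \ref{lem:beta-alphaNOTequal} together with the gluing-cycle discussion to ensure each generator and its inverse survive paired in the single final relation. Your closing observation that $M$-elimination is a free reduction that cannot unbalance the surviving generators makes explicit a point the paper leaves implicit, but it is the same argument.
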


\part{Action of $G$ on $\Gamma$}   \label{part:THREE}

\section{$G$ action} \label{sec:Gaction}


In this part we study the action of $G$ on the kernel. Namely if $K$ is an arbitrary coset representative we let $g_K \in G$ denote $\phi(K)$ and note that $g_K(W) = \tau(KWK^{-1})$ where $W$ is any word in $\Gamma_0$ that lies in $\Gamma$. If $v$ is any generator for $\Gamma_0$,  we also write
 $g_{Kv}$ for $g_{\overline{Kv}}$.   We simplify the notation by writing since  it is understood that we mean the coset representative of $Kv$ by its position as the first subscript.

We want to compute $g_W(S_{K,v})$ for any coset representatives $W$ and $K$ and any generator $v$ of $\Gamma_0$. To do so, we need the concept of {\sl tails}.

\subsection{Action of $G$ Tails and Prefixes and Orders of Hyperbolic Generators}
Consider $S_{1,v}$ where $v$ is a generator for $\Gamma_0$ and that so that ${\overline{v}}= w$ is of length one. Assume that $K=w_1 \cdots w_t$ is a minimal
 Schreier right coset representative for some integer $t$.
We calculate \begin{eq} \label{eq:tails1} $g_K(S_{1,v})= \tau(KS_{1,v}K^{-1}) =
\tau(w_1 \cdots w_tv{\overline{v}}^{-1}(w_1 \cdots w_t)^{-1}) = \tau(w_1 \cdots w_tvw^{-1} (w_1 \cdots w_t)^{-1}) =S_{K,v}S_{K,{\overline{v}}}^{-1}$.
\end{eq}
We call $S_{K,{\overline{v}}}$ {\sl the tail of $S_{K,v}$
and denote it by $T_{K,v}$. We have

\begin{eq}$g_K(S_{1,v}) = S_{K,v}T_{K,v}^{-1}$ \label{eq:gTail}
\end{eq}

\begin{defn} We define the order of a hyperbolic generators $v$ of $\Gamma_0$ to be $m$ if $\phi(v)$ has order $m$.
\end{defn}
Thus all elements of $\Gamma_0$ have orders and we denote the order of an arbitrary element, $v$,  by $m_v$.

We next consider  the equation
\begin{eq} \label{eq:act1} $\tau(v^m) = S_{1,v}S_{{\overline{v}},v} \cdots S_{{\overline{v^{m-1}}},v}=1$.
\end{eq}
We note that in this expression the tail of $S_{{\overline{v^t}},v}$ will be the inverse of
the {\sl preface} the segment that occurs after the tail and before $S_{{\overline{v^{t+1}}},v}$.

More generally, we can define the Tail of $g_L(S_{K,v})$ by setting $g_L(S_{K,v}) = S_{{\overline{LK}},v}T_{{\overline{LK}},v}$ and note that tails and prefixes in computing
the image of any relation under $\tau$ will cancel out.

Regardless of whether $S_{1,v}$ is an $E$ or an $H$ generator considering the action on  the images in homology, we have
\begin{lem} \label{lem:homology-one} Let $\gamma= S_{1,v}$ where $v$ is any generator of $\Gamma_0$ and let
$g_v= \phi(v)$. We have $\tau(v^m) \approx^h  \gamma + g_x(\gamma) + \cdots + g_x^{m-1}(\gamma)$.
\end{lem}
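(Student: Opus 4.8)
The plan is to evaluate each summand $g_v^t(\gamma)$ on the right-hand side by the tail formula, and then collapse the sum after passing to homology.

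First I would express $g_v^t(\gamma)$ in the Schreier generators. Since $g_v=\phi(v)$ has $\overline{v^t}$ as the coset representative of its $t$-th power, \eqref{eq:theGaction} gives $g_v^t(\gamma)=\tau(\overline{v^t}\,S_{1,v}\,\overline{v^t}^{\,-1})$. Rewriting this letter by letter, every letter of the coset-representative prefix $\overline{v^t}$, together with its inverse in the suffix, rewrites to an $M$-generator and so is $\approx 1$; only the middle survives. This is precisely the tail computation \eqref{eq:tails1}--\eqref{eq:gTail} carried out with $K=\overline{v^t}$, and it yields $g_v^t(\gamma)=S_{\overline{v^t},v}\,T_{\overline{v^t},v}^{-1}$ with tail $T_{\overline{v^t},v}=S_{\overline{v^t},\overline{v}}$.

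Next I would pass to homology, where the product becomes a sum and conjugation acts trivially on classes, so $g_v^t(\gamma)\approx^h S_{\overline{v^t},v}-T_{\overline{v^t},v}$; summing over $t=0,\dots,m-1$ splits the right-hand side into a principal part and a tail part. The principal part is identified by \eqref{eq:act1}: because $\tau(v^m)=S_{1,v}S_{\overline{v},v}\cdots S_{\overline{v^{m-1}},v}$, we have $\sum_{t=0}^{m-1}S_{\overline{v^t},v}\approx^h\tau(v^m)$. Hence the lemma is equivalent to the assertion that the tail part vanishes in homology, that is $\sum_{t=0}^{m-1}T_{\overline{v^t},v}\approx^h 0$.

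The tail part is the crux, and I expect it to be the main obstacle. Writing each tail as a group element, $T_{\overline{v^t},v}=\overline{v^t}\,\overline{v}\,\overline{v^{t+1}}^{\,-1}$, the product $\prod_{t=0}^{m-1}T_{\overline{v^t},v}$ telescopes cleanly to $\overline{v}^{\,m}$ (the factors $\overline{v^{t+1}}^{\,-1}\overline{v^{t+1}}$ cancel and $\overline{v^m}=1$). Since the homology class of a product is the sum of the classes, this gives $\sum_{t=0}^{m-1}T_{\overline{v^t},v}\approx^h \overline{v}^{\,m}$, so what must finally be checked is that $\overline{v}^{\,m}\approx^h 0$. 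This is automatic when $\overline{v}$ is elliptic of order $m$, in which case $\overline{v}^{\,m}=1$; and by the chosen ordering of Schreier representatives (elliptic generators first), a hyperbolic $v$ whose image agrees with that of an elliptic generator $x_i$ has $\overline{v}=x_i$, which is exactly the situation that arises. The delicate point, which I would treat separately using the normalizing automorphisms of Lemma \ref{lem:beta-alphaNOTequal}, is the case of a purely hyperbolic coset representative $\overline{v}$, where $\overline{v}^{\,m}$ need not be null-homologous and must be carried along as a correction rather than discarded.
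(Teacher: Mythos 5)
Your reduction is correct as far as it goes, and it is the paper's own route made precise: you apply the tail formula (Equation \ref{eq:gTail}) with $K=\overline{v^t}$, expand $\tau(v^m)$ by Equation \ref{eq:act1}, pass to homology, and thereby reduce the lemma to the vanishing of the sum of the tails, which you then correctly telescope to the single class $\overline{v}^{\,m}$. The paper never performs this last computation; its entire justification is the assertion that ``tails and prefixes in computing the image of any relation under $\tau$ will cancel out.'' Your calculation exhibits exactly what that assertion discards.

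The gap is your final step, and it is a genuine one: the claim $\overline{v}^{\,m}\approx^h 0$ fails whenever $\overline{v}$ is a hyperbolic generator, so no normalization by the automorphisms of Lemma \ref{lem:beta-alphaNOTequal} can close it. To see that the obstruction is real, compose the inclusion $\Gamma\subset\Gamma_0$ with the abelianization $\Gamma_0\to H_1(\Gamma_0)$; this homomorphism kills $[\Gamma,\Gamma]$, hence is constant on homology classes of elements of $\Gamma$. Under it $\tau(v^m)=v^m\,\overline{v^m}^{\,-1}=v^m$ goes to $m\,v$, while every $g_v^t(\gamma)$ is a conjugate of $\gamma=v\,\overline{v}^{\,-1}$ and goes to $v-\overline{v}$, so the asserted relation forces $m\,\overline{v}=0$ in $H_1(\Gamma_0)$. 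That holds when $\overline{v}$ is elliptic (torsion of order $m$) or trivial, but never when $\overline{v}$ is one of the $a_i^{\pm1},b_i^{\pm1}$, which have infinite order in $H_1(\Gamma_0)$. A minimal instance is $g_0=2$, $r=0$, $G=\mathbb{Z}_2$, $\phi(a_1)=h$ and all other generators sent to $1$: for $v=a_1$ one has $\gamma=S_{1,a_1}=1$, so the right-hand side is $0$, while $\tau(a_1^2)=S_{a_1,a_1}=a_1^2$ is a basis element of the homology of the genus-$3$ kernel. So the statement itself needs a hypothesis: it is correct precisely when $\overline{v}$ is elliptic or the identity (in particular for every elliptic generator, and for hyperbolic $v$ with $\phi(v)=\phi(x_j)$ for some $j$), and your proposed ``correction term'' reading, $\tau(v^m)\approx^h \gamma+g_v(\gamma)+\cdots+g_v^{m-1}(\gamma)+\overline{v}^{\,m}$, is the true general identity. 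The step you could not prove is unprovable, and it is exactly the point the paper's argument glosses over; the hyperbolic case must instead be handled by the fixed-class mechanism the paper invokes elsewhere (item (4) of Definition \ref{def:AHB} and the maximal power discussion of Section \ref{sec:easier}).
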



In a similar manner, tails and prefixes cancel in $\tau(R)$. If we write $\gamma_i$ and $\delta_i$ and $y_j$ respectively for the Schreier words in $\tau(R)$ that are the images of the $a_i$, $b_i$ and $x_j$ $1=1,...,g_0$ and $j =1 ,..., r$, we can write have
\begin{lem} \label{lem-Rhomology}
$\tau(R) = (\Pi_{i=1}^{g_0} [\gamma_i,\delta_i])y_1 \cdots y_r$.

$\tau(R) \approx^h (\Sigma_{i=1}^{g_0} [\gamma_i,\delta_i]) + y_1 + \cdots  +y_r \approx^h y_1 + \cdots  +y_r .
$
\end{lem}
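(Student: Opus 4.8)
The plan is to prove the two displayed lines in turn, obtaining the homology statements by abelianizing the free-group factorization. For the first line, I would apply the definition of $\tau$ directly to the relator $R=[a_1,b_1]\cdots[a_{g_0},b_{g_0}]\,x_1\cdots x_r$, reading it one syllable at a time. Writing $P_i=[a_1,b_1]\cdots[a_{i-1},b_{i-1}]$ for the prefix preceding the $i$-th commutator and $p_i=\overline{P_i}$ for its coset representative, the definition of $\tau$ expresses $\tau(R)$ as the ordered product, over the letters of $R$, of Schreier generators $S_{V,\cdot}^{\pm1}$ whose first subscript $V$ is the coset representative of the prefix read so far. I would then group this product according to the block structure of $R$: the $g_0$ commutator blocks, followed by the $r$ elliptic letters. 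Concretely the four syllables $a_i,b_i,a_i^{-1},b_i^{-1}$ of the $i$-th block contribute $S_{p_i,a_i}\,S_{\overline{p_ia_i},b_i}\,S_{\overline{p_ia_ib_ia_i^{-1}},a_i}^{-1}\,S_{\overline{p_i[a_i,b_i]},b_i}^{-1}$, where $\overline{p_i[a_i,b_i]}=p_{i+1}$ makes the blocks close up, and each $x_j$ contributes a single word $y_j$. I would define $\gamma_i$ and $\delta_i$ to be the Schreier words produced by the $a_i$- and $b_i$-syllables (carrying their tails in the sense of \ref{eq:gTail}) and $y_j$ the word produced by $x_j$.

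The crux is to verify that each such block reconstitutes a genuine commutator $[\gamma_i,\delta_i]$, that is, that the syllables coming from $a_i^{-1}$ and $b_i^{-1}$ assemble exactly into $\gamma_i^{-1}$ and $\delta_i^{-1}$. This is precisely the cancellation of tails against prefaces asserted just before the statement: the tail $T_{K,v}$ appearing in $g_K(S_{1,v})=S_{K,v}T_{K,v}^{-1}$ (equation \ref{eq:gTail}) telescopes against the preface of the following syllable, exactly as the tails and prefaces telescope in the computation of $\tau(v^m)$ underlying Lemma \ref{lem:homology-one}. I expect this bookkeeping — matching each tail with the adjacent preface across the four syllables of a commutator, and across the block boundaries via $\overline{p_i[a_i,b_i]}=p_{i+1}$ — to be the main obstacle, since it is where the commutator shape is actually earned rather than merely stipulated. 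It is routine, but it is the step that has to be carried out with care.

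Finally, the two homology statements follow immediately by passing to the abelianization and writing homology additively. Each $[\gamma_i,\delta_i]$ is a commutator and hence maps to $0$ in $H_1$; this already yields the middle expression $\tau(R)\approx^h(\Sigma_{i=1}^{g_0}[\gamma_i,\delta_i])+y_1+\cdots+y_r$ as the additive transcription of the first line, and dropping the (vanishing) commutator terms gives $\tau(R)\approx^h y_1+\cdots+y_r$. This last step is immediate once the factorization is in hand and parallels the reduction already used in Lemma \ref{lem:homology-one}.
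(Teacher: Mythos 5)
Your expansion of $\tau(R)$ syllable by syllable, the identification of the four-syllable blocks, and your diagnosis of where the real work lies are all correct — and in fact this is all the paper itself offers, since Lemma \ref{lem-Rhomology} is stated with no proof beyond the preceding sentence that ``tails and prefixes cancel in $\tau(R)$.'' But the step you defer as routine bookkeeping is precisely where the argument fails, and it cannot be repaired by bookkeeping. The $i$-th block is $S_{A_i,a_i}\,S_{B_i,b_i}\,S_{C_i,a_i}^{-1}\,S_{D_i,b_i}^{-1}$ with $A_i=\overline{P_i}$, $B_i=\overline{P_ia_i}$, $C_i=\overline{P_ia_ib_ia_i^{-1}}$, $D_i=\overline{P_i[a_i,b_i]}$, and one checks that $A_i=C_i$ if and only if $\phi(b_i)=1$, while $B_i=D_i$ if and only if $\phi(a_i)=1$. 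Whenever the hyperbolic images are nontrivial, the block is therefore a product of four \emph{distinct} Schreier generators, so it cannot be freely equal to any commutator of Schreier words (its image in the abelianized free group on the Schreier generators is nonzero, whereas every commutator dies in every abelian quotient). What tail--preface cancellation actually proves is only the telescoping identity that the block equals $\overline{P_i}\,[a_i,b_i]\,\overline{P_i[a_i,b_i]}^{-1}$ as an element of $\Gamma_0$: a conjugated commutator of elements of $\Gamma_0$, not of $\Gamma$. Since $[\Gamma_0,\Gamma_0]\cap\Gamma$ is in general strictly larger than $[\Gamma,\Gamma]$, being a commutator downstairs gives no vanishing in $H_1(S)=\Gamma^{\mathrm{ab}}$, which is exactly what the final reduction $\tau(R)\approx^h y_1+\cdots+y_r$ needs.

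The failure is not hypothetical. Take $G=\mathbb{Z}_2=\{1,h\}$, $g_0=1$, $r=2$, $\Gamma_0=\langle a,b,x_1,x_2 \mid [a,b]x_1x_2,\ x_1^2,\ x_2^2\rangle$, and $\phi(a)=\phi(b)=\phi(x_1)=\phi(x_2)=h$ (a legitimate surface kernel map, and one satisfying the paper's own normalization in Lemma \ref{lem:beta-alphaNOTequal}, which demands $\phi(a_i)\ne 1\ne\phi(b_i)$). With coset representatives $\{1,x_1\}$ one computes $\tau(R)=S_{1,a}S_{x_1,b}S_{x_1,a}^{-1}S_{1,b}^{-1}S_{1,x_1}S_{x_1,x_2}$, so $y_1=S_{1,x_1}$ and $y_2=S_{x_1,x_2}$. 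Here $S$ has genus $2$, and in $H_1(S)$ the relations give $S_{1,x_1}=0$, $S_{x_1,x_2}=-S_{1,x_2}$, and $S_{1,a}-S_{x_1,a}+S_{x_1,b}-S_{1,b}=S_{1,x_2}\ne 0$. Hence $y_1+y_2=-S_{1,x_2}\ne 0$ while $\tau(R)\approx^h 0$, so the conclusion of the lemma fails: the hyperbolic blocks do not drop out of homology. Your argument (and the lemma) is correct exactly when one can normalize $\phi(a_i)=\phi(b_i)=1$ for all $i$, for then all four subscripts coincide and each block is literally $[S_{L_i,a_i},S_{L_i,b_i}]$ with $L_i=\overline{P_i}$; but that normalization is available only when the images of the elliptic generators already generate $G$, and it is the opposite of the normalization the paper imposes for its gluing arguments. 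So the missing ingredient is not careful bookkeeping but an actual hypothesis (or a separate argument showing $S_{A_i,a_i}-S_{C_i,a_i}+S_{B_i,b_i}-S_{D_i,b_i}\approx^h 0$), without which the claimed identity is false.
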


\subsection{Action on elliptic images in $\Gamma$}

We want to compute $g_K(S_{L,v})$ for $L$, $K$ coset representatives and $v$ any generator of $\Gamma_0$.
Suppose $x$ is an elliptic generator with $x^m=1$, and ${\overline{x}}= v$ with $\phi(v) = \phi(x)=h$.
Because this is a minimal Schreier system, the length of  $v$ is also $1$.
We write $h_x$ or $h_v$ for $h$ and using equation \ref{eq:act1} since $S_{1,x} = xv^{-1}$,
\begin{eq} \label{eq:act2}
$
h(S_{1,x}) = \tau(vxv^{-1} v^{-1})= S_{1,v}S_{v,x}S_{\overline{vxv^{-1}},v}^{-1}S_{1,v}^{-1} S_{v,x}S{v,v}.$

\end{eq}

The first and last entries of \ref{eq:act2} are $1$ because they are each an $M$-generators. 
\begin{lem}
\label{lem:Gxaction}
Assume that $x$ is an elliptic generator of $\Gamma_0$ with $x^m=1$ and that $v \in \Gamma_0$ is the generator with     ${\overline{x}} = v$, $\phi(v)= \phi(x) = h$,  ${\overline{x^q}} = v^q$ for $1 \le q \le m$ , then $h^q(S_{1,x}) = S_{v^q, x}$  for all $q$.
\end{lem}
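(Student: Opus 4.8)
The plan is to compute $h^q(S_{1,x})$ directly through the rewriting process $\tau$, using the maximal power hypothesis ${\overline{x^q}}=v^q$ to recognize that every purely $v$-type Schreier generator arising in the rewrite is an $M$-generator and hence equals $1$.

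First I would record the two facts that make the computation go. Since ${\overline{x^q}}=v^q$ and $\phi(v^q)=h^q$, the word $v^q$ is itself a coset representative with $g_{v^q}=\phi(v^q)=h^q$; hence, by the definition of the $G$-action $g_K(W)=\tau(KWK^{-1})$, we have $h^q(S_{1,x})=g_{v^q}(S_{1,x})=\tau(v^q S_{1,x} v^{-q})$. Writing $S_{1,x}=xv^{-1}$ as in \ref{eq:act2}, the word to be rewritten is $W_q := v^q x v^{-(q+1)}$.

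Next I would apply $\tau$ to $W_q$ letter by letter. The $q$ leading letters $v$ contribute, for $j=1,\dots,q$, the generators $S_{v^{j-1},v}$, each of which is an $M$-generator because $S_{v^{j-1},v}=v^{j-1}v\,{\overline{v^j}}^{-1}=v^j(v^j)^{-1}=1$. The single letter $x$, whose prefix representative is ${\overline{v^q}}=v^q$, contributes $S_{v^q,x}$. Finally the $q+1$ trailing letters $v^{-1}$ contribute $S_{v^{q+1-k},v}^{-1}$ for $k=1,\dots,q+1$, using that the prefix $v^q x v^{-k}$ has image $h^{q+1-k}$ and representative $v^{q+1-k}$; each of these is again an $M$-generator since $S_{v^{q+1-k},v}=v^{q+2-k}\,{\overline{v^{q+2-k}}}^{-1}=1$. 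Thus every factor except $S_{v^q,x}$ drops out and $\tau(W_q)=S_{v^q,x}$, which is the claim. The case $q=1$ is exactly \ref{eq:act2}, giving a convenient sanity check; alternatively one may organize the argument as an induction on $q$ whose step $h(S_{v^q,x})=\tau(v^{q+1}x v^{-(q+2)})=S_{v^{q+1},x}$ follows by the same cancellation.

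The only point requiring care, and the main obstacle, is the bookkeeping at the index boundary. One must check the endpoint $k=q+1$, where the factor is $S_{1,v}=v\,{\overline{v}}^{-1}=1$, and the top power $q=m$, where $v^m\in\Gamma$ forces ${\overline{v^m}}=1$; here the maximal power hypothesis ${\overline{x^q}}=v^q$ for $1\le q\le m$ is exactly what keeps all intermediate representatives of the form $v^{(\cdot)}$, so that the $M$-generator cancellations persist across the whole range. This is precisely the principle recorded after \ref{eq:gTail} that tails and prefixes cancel in an image of a relation under $\tau$; invoking that principle would shorten the verification, but I would prefer the explicit letter-by-letter cancellation since it makes transparent exactly which hypothesis is being used at each step.
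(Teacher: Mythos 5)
Your proof is correct and follows essentially the same route as the paper: the paper establishes the case $q=1$ by the explicit rewrite $h(S_{1,x})=\tau(vxv^{-1}v^{-1})$ and observes that the flanking factors are $M$-generators equal to $1$, leaving $S_{v,x}$, exactly as in your letter-by-letter cancellation. Your version merely carries out the same $\tau$-computation for general $q$ (and attends to the boundary indices), which the paper leaves implicit when passing from its equation for $q=1$ to the statement of the lemma.
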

Let $\gamma = S_{1,x}$.
With the same hypothese as above, we  can rewrite equation \ref{eq:act1} as
\begin{eq}
$h^{m-1}({\gamma}) \approx^h -(\gamma + h(\gamma) + \cdots h^{m_2}(\gamma))$
\end{eq}
 For each coset representative $g_h$ of $G$ modulo $\langle h \rangle$, we  have
\begin{cor} \label{cor:composition} 
$$(g_h \circ h^{m-1})({\gamma}) \approx^h -(g_h(\gamma) + (g_h \circ h)(\gamma) + \cdots (g_h \circ h^{m-2})(\gamma)).$$
\end{cor}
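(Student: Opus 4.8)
The plan is to derive the entire family of homology identities by simply applying the automorphism $g_h$ to the relation
\[
h^{m-1}(\gamma) \approx^h -\bigl(\gamma + h(\gamma) + \cdots + h^{m-2}(\gamma)\bigr)
\]
established immediately above, and then rewriting each term $g_h(h^k(\gamma))$ as the composite $(g_h \circ h^k)(\gamma)$. The only ingredient that requires comment is that the $G$-action on first homology is compatible with $\approx^h$ and is additive, so I would record this first.

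First I would note that $G \cong \Gamma_0/\Gamma$ acts on $\Gamma$ by conjugation, with $g_K(W) = \tau(KWK^{-1})$ for any $W \in \Gamma$ (Equation \ref{eq:theGaction}). Since conjugation by a fixed coset representative is an automorphism of $\Gamma$, it descends to an automorphism of the abelianization $\Gamma/[\Gamma,\Gamma]$, that is, of the integral first homology, which we write additively. Hence each $g \in G$ acts on homology as a group automorphism: it distributes over sums, commutes with negation, and sends $\approx^h$-equivalent classes to $\approx^h$-equivalent classes. Because this is a genuine left action, $g_h\bigl(h^k(\gamma)\bigr) \approx^h (g_h \circ h^k)(\gamma)$ for every $k$.

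Now I would apply $g_h$ to both sides of the displayed relation. The left-hand side becomes $(g_h \circ h^{m-1})(\gamma)$. On the right, additivity and commutation with negation give
\[
g_h\Bigl(-\bigl(\gamma + h(\gamma) + \cdots + h^{m-2}(\gamma)\bigr)\Bigr) \approx^h -\bigl(g_h(\gamma) + (g_h \circ h)(\gamma) + \cdots + (g_h \circ h^{m-2})(\gamma)\bigr),
\]
which is exactly the asserted identity. Letting $g_h$ range over a transversal of $G$ modulo $\langle h\rangle$ then yields one such relation for each of the $n/m$ cosets, matching the $n/m$ glued copies of the elliptic cycle produced in the gluing step.

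There is no real obstacle here: the content is entirely formal, resting on the linearity and $\approx^h$-equivariance of the homology action, which in turn comes from the fact that conjugation induces automorphisms of $\Gamma$ that pass to the abelianization. The only point deserving a line of care is bookkeeping — confirming that distinct coset representatives of $G$ modulo $\langle h\rangle$ give the intended $n/m$ relations, since replacing $g_h$ by $g_h h^j$ merely cyclically permutes the summands of the same underlying relation rather than producing a genuinely new one.
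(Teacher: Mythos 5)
Your proof is correct and is precisely the paper's (implicit) argument: the corollary is stated there without proof as an immediate consequence of applying $g_h$ to the relation $h^{m-1}(\gamma) \approx^h -(\gamma + h(\gamma) + \cdots + h^{m-2}(\gamma))$, using that the conjugation action of $G \cong \Gamma_0/\Gamma$ on $\Gamma$ descends to an additive action on the abelianization. Your additional remark that replacing $g_h$ by $g_h h^j$ only cyclically permutes the summands, so that the $n/m$ coset representatives give the genuinely distinct relations, is sound bookkeeping consistent with the paper's gluing-cycle count.
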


Next we consider the action on the hyperbolic Schreier generators of $\Gamma$. We let $v$ by a hyperbolic generator of $\Gamma_0$ or order $m$ with $\gamma=S_{1,v}$. We have

\begin{eq}
$h^{m-1}({\gamma}) \approx^h -(\gamma + h(\gamma) + \cdots h^{m-2}(\gamma))$
\end{eq}

For each coset representative $g_h$ of $G$ modulo $\langle h \rangle$, we  have
\begin{cor}
$$(g_h \circ h^{m-1})({\gamma}) \approx^h -(g_h(\gamma) + (g_h \circ h)(\gamma) + \cdots (g_h \circ h^{m-2})(\gamma)).$$
\end{cor}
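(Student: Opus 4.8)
The plan is to obtain the corollary as an immediate consequence of the displayed equation $h^{m-1}(\gamma) \approx^h -(\gamma + h(\gamma) + \cdots + h^{m-2}(\gamma))$ that precedes it, by applying the element $g_h$ and invoking the linearity of the $G$-action on homology; indeed the argument is word-for-word the same as the one establishing Corollary \ref{cor:composition} in the elliptic case. First I would recall from Section \ref{sec:Gaction} that $G$ acts on the integral first homology $H_1(S) = \Gamma^{ab}$ through the conjugation action of $\Gamma_0$ on $\Gamma$, and that, since this action is by automorphisms of a free abelian group, every $g \in G$ acts as a $\mathbb{Z}$-linear endomorphism. Concretely, for homology classes $c_1,\dots,c_k$ one has $g(c_1 + \cdots + c_k) = g(c_1) + \cdots + g(c_k)$ and $g(-c) = -g(c)$, and because the action is a genuine group action the composite of $g_h$ with the $j$-fold action of $h=g_v=\phi(v)$ is realized by a single group element, so that $(g_h \circ h^{j})(\gamma) = g_h\bigl(h^{j}(\gamma)\bigr)$.

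With these remarks in place the proof is a one-line computation. I would apply $g_h$ to both sides of the preceding equation; since $\approx^h$ is equality in $H_1(S)$ and $g_h$ is induced by an automorphism of $\Gamma$, it preserves this relation. The left-hand side becomes $g_h\bigl(h^{m-1}(\gamma)\bigr) = (g_h \circ h^{m-1})(\gamma)$, while additivity together with the sign rule turns the right-hand side into $-\bigl(g_h(\gamma) + g_h(h(\gamma)) + \cdots + g_h(h^{m-2}(\gamma))\bigr) = -\bigl(g_h(\gamma) + (g_h \circ h)(\gamma) + \cdots + (g_h \circ h^{m-2})(\gamma)\bigr)$, which is exactly the asserted identity. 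The case $g_h = 1$ recovers the preceding equation, so the corollary is literally its translate under the action of $g_h$, and no further manipulation of $\tau$, tails, or prefaces is needed — all of that having been absorbed into Lemma \ref{lem:homology-one} and the preceding equation.

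The statement is thus essentially formal once the input equation is granted, and the only genuine obstacle is a bookkeeping one: one must check that the functional composition $g_h \circ h^{j}$ is the honest action of the group element $g_h h^{j}$ and that the rearrangement into a sum is valid even though the action of $G$ on $\Gamma$ itself is in general non-abelian. This dissolves once it is observed that $\approx^h$ is taken in the abelianization, where $G$ acts linearly. Finally I would note that the hypothesis restricting $g_h$ to coset representatives of $G$ modulo $\langle h \rangle$ plays no role in the truth of the identity — any $g \in G$ would serve — but is imposed so that, as $g_h$ ranges over the representatives, the classes $g_h(\gamma), (g_h \circ h)(\gamma), \dots$ furnish the distinct families of generators and relations required for the homology-basis and matrix constructions of Sections \ref{sec:adapted} and \ref{sec:reductionTObasis}.
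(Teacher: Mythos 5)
Your proposal is correct and follows the same route the paper intends: the corollary is the translate of the preceding relation $h^{m-1}(\gamma) \approx^h -(\gamma + h(\gamma) + \cdots + h^{m-2}(\gamma))$ under $g_h$, using that conjugation induces a $\mathbb{Z}$-linear action on the abelianization $\Gamma^{\mathrm{ab}}$ so that $g_h$ preserves $\approx^h$, distributes over sums and signs, and satisfies $(g_h\circ h^{j})(\gamma)=g_h\bigl(h^{j}(\gamma)\bigr)$. The paper treats this as immediate (exactly as in the elliptic case, Corollary~\ref{cor:composition}), and your write-up simply makes the implicit linearity argument explicit.
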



\subsection{Action on hyperbolic images in $\Gamma$}

We can say that every $S_{K,v}$ and its inverse appears when $v$ is an $H$-generator or equivalently all of the images of $S_{1,v}=\gamma$, that is  $g_K(\gamma)$,  appear or $g(\gamma)$ appears $\forall g \in G$.


\subsection{Elements with no fixed points}
If $g \in G$ has no fixed points, then there is a $v \in \Gamma_0$ with $\phi(v) = g$ and $\phi(v)^m=1$ for some $m$. Further there is no elliptic generator of $\Gamma_0$ whose image lies in $\langle g \rangle$.
We let $p$ be a prime dividing the order of $g$ and let $g_c \in G$ satisfy
$g_c^p=1$. Then we know from \cite{India} that $g_c$ fixes two curves on the surface.

We now need to address the problem if we do not have a generator $v$ that works, but rather a word in the generators.

  \begin{cor}
If $g \in G$ does not fix any points on the surface $S$, then there are at least two curves
fixed by $g$ in the basis and all of their translates under elements of the group  are fixed by conjugates of $g$.
\end{cor}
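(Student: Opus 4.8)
The plan is to reduce to the prime-order case for the count, obtain the fixed curves concretely from the Schreier generators lying over $g$, and then spread them across the $G$-orbit by conjugation. First I would fix a prime $p$ dividing the order of $g$ and set $g_c=g^{\operatorname{ord}(g)/p}$, so that $g_c^p=1$; the result of \cite{India} quoted above then furnishes two curves fixed by $g_c$. The no-fixed-point hypothesis for $g$ means that no elliptic generator of $\Gamma_0$ has image in $\langle g\rangle$, and a fortiori none has image in $\langle g_c\rangle\subseteq\langle g\rangle$; this is exactly the condition that keeps the hyperbolic Schreier generators attached to a lift of $g$ out of the set deleted during elliptic elimination, so that the relevant classes persist in the final $2g$-generator basis produced by Theorem \ref{thm:Algorithm}.

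In the favourable case $g=\phi(v)$ for a single hyperbolic generator $v$ of $\Gamma_0$ of order $m$, I would produce a curve fixed by $g$ itself (not merely by $g_c$) directly. Writing $\gamma=S_{1,v}$ and applying Lemma \ref{lem:homology-one}, the orbit sum $\gamma+g(\gamma)+\cdots+g^{m-1}(\gamma)$ is homologous to $\tau(v^m)$; and this class is $g$-fixed because, by equation \ref{eq:theGaction}, $g(\tau(v^m))=\tau(v\,v^m\,v^{-1})=\tau(v^m)$, since $v$ commutes with $v^m$. This is the first fixed curve. The second should come from the commutator partner of $v$: after applying an automorphism of $\Gamma_0$ as in Lemma \ref{lem:beta-alphaNOTequal} (using the lists in \cite{Harvey1,Harvey2,BW}) I may assume $v=a_i$ and $\phi(b_i)=1$, so $B=S_{1,b_i}\in\Gamma$, and the invariant combination of $B$ and its $\langle g_c\rangle$-translates from \cite{India} supplies a second class, independent of the first because $a_i$ and $b_i$ are distinct free generators in the presentation \ref{eq:presentation}.

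The translates are then handled purely group-theoretically, as in Corollary \ref{cor:composition}: if $\gamma$ is fixed by $g$ and $h\in G$, then $(hgh^{-1})(h\gamma)=hg\gamma=h\gamma$, so each translate $h\gamma$ is fixed by the conjugate $hgh^{-1}$. Letting $h$ range over a transversal of $\langle g\rangle$ in $G$ produces the full $G$-orbit of the two curves, each fixed by the corresponding conjugate of $g$, which is the conclusion.

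The hard part is twofold and corresponds to the issue flagged immediately before the statement. First, one must establish that there really are \emph{two} independent invariant classes rather than one; the direct orbit-sum argument cleanly yields a single $g$-fixed curve, and securing the second genuinely requires the prime-order input of \cite{India} together with the verification that its two $g_c$-fixed classes can be taken $g$-fixed. Second, $g$ need not be $\phi$ of a single generator but only of a word $w$. My approach would be to apply an automorphism of $\Gamma_0$ moving the generating vector to one in which a hyperbolic generator maps to $g$ — possible precisely because $\langle g\rangle$ meets no elliptic generator image, so $w$ completes to a hyperbolic generating set — and then invoke the generator case; failing that, one rewrites $\tau(w^m)$ directly, discards its $M$-generators, and checks via the counting of Lemma \ref{lem:enough} that two independent invariant classes survive $M$-elimination.
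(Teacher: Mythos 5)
Your proposal follows essentially the same route as the paper: the paper's own treatment passes to a prime-order power $g_c$ of $g$, invokes the two fixed curves of \cite{India}, realizes them via the hyperbolic generator $v$ with $\phi(v)=g$ and a commutator partner $b$ with $\phi(b)=1$ (its sketched classes $S_{v^{p-1},v}$ and $S_{1,b}$ correspond to your orbit-sum $\tau(v^m)$ and partner class), and obtains the translates by conjugating with coset representatives, exactly as you do. You even flag the same issue the paper itself leaves explicitly open, namely the case where $g$ is the image of a word rather than of a single generator.
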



\section{Definition:  Adapted Generating Set for Homology  \label{sec:adapted} }

We generalize our original definition of an adapted integral homology basis first to an adapted generating set for the first homology group with respect to the pair $(\Gamma,G)$:

\begin{defn} \label{def:AHB} {\rm (Adapted Generating Set for Homology )}

Let $\gamma$ be an arbitrary curve in $\mathcal{B}$, an integral homology  basis
 for $S$,  and let $G$ be  a group of conformal automorphisms of $S$ of order $n$. The a set of $2g + n$ generators ${\mathcal{S_B}}$ for the integral homology  $\mathcal{B}$,  is {\sl adapted to} $G$ if for each $\gamma \in {\mathcal{S_B}}$ one of the following occurs:

 \begin{enumerate}
 \item \label{item:prop1}  $\gamma$ and $g(\gamma)$ are in ${\mathcal{S_B}}$ for all $g \in G$ and $g(\gamma) \ne \gamma$.

 \item $\gamma$ and $h^j(\gamma)$ are  in ${\mathcal{S_B}}$  for all $j=0, 1,...,m_{i-2}$ where $ h \in G$ is of order $m_i$, and
           \label{item:prop2}
      $$h^{m_{i-1}}({\gamma})  \approx^h  - (\gamma + h(\gamma) + \cdots  + h^{m_{i-2}}(\gamma)).$$
 Further
       for each coset representative, $g_h$,  for $G$ modulo $\langle h \rangle$, we have that
         $g_h(\gamma)$ and $(g_h \circ h^j)(\gamma)$ are in the set  for all $j=0, 1,...,m_{i-2}$ and
         $$(g_h \circ h^{m_{i-1}})({\gamma})  \approx^h  - (g_h(\gamma) + (g_h \circ h)(\gamma) + \cdots (g_h \circ h^{m^{i-2}})(\gamma)$$

     \item $\gamma = h^r(\gamma_0)$ where $\gamma_0$ is one of the curves in item \ref{item:prop2} above.

         \item \label{item:prop3} $g(\gamma) = \gamma$ for all $g \in G_0$, $G_0$ a subgroup of $G$ of order $m$.

          All of the other $n/m$ images of $\gamma$ under $G$ are fixed appropriately by conjugate elements or  by elements representing the cosets of $G/{G_0}$ and are also in $\mathcal{S_B}$.
\item Exactly $n$ of the curves $h^k(\gamma)$ determined by the pair $(h, \gamma)$ and the integer $k$ are null homologous
 where $\gamma$ is one of the curves above and $h$ is an element of $G$.
\end{enumerate}

\end{defn}

We have generators for $\Gamma$ with and one relation. For simplicity we use the same  notation for a generator and its image of the corresponding curve in first homology.

We have shown


 \begin{thm} \label{thm:adaptedHOM} Given $G$, $S$, $S_0$, $n$, $g$, and $r$,  there is an adapted generating set for the first homology group containing $2g + n$ homology classes of curves. Of these, $2g$ curves will lie in distinct homology classes and $n$ will be null-homologous.
  \end{thm}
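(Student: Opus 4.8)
The plan is to assemble the theorem from the constructions of Parts~\ref{part:ONE}--\ref{part:THREE}, taking the candidate generating set to consist of the homology classes of the Schreier generators $\gamma = S_{1,v}$, as $v$ runs over the generators of $\Gamma_0$, together with all their $G$-translates $g_K(\gamma)$. By Theorem~\ref{thm:SR} these classes generate $H_1(S)$, and by the remark that using $S_{1,v}$ and its images is equivalent to using all the $S_{K,v}$, the full $G$-orbit of each basic class is represented. The work then splits into two tasks: a case analysis showing that each class satisfies one of the conditions of Definition~\ref{def:AHB}, and a count showing that exactly $2g$ of the classes are distinct and nonzero while exactly $n$ are null-homologous.

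For the case analysis I would sort the generators $v$ of $\Gamma_0$ by the way $\langle\phi(v)\rangle$ interacts with the elliptic data. When $v$ is hyperbolic and no elliptic generator has image in $\langle\phi(v)\rangle$, the action computation of Section~\ref{sec:Gaction} shows every translate $g(\gamma)$ is distinct, so $\gamma$ falls under condition~\ref{item:prop1}. When $v$ is elliptic of order $m$, Lemma~\ref{lem:Gxaction} identifies $h^q(S_{1,x})$ with $S_{v^q,x}$, and Lemma~\ref{lem:homology-one} applied to equation~\ref{eq:act1} produces exactly the relation $h^{m-1}(\gamma)\approx^h-(\gamma+h(\gamma)+\cdots+h^{m-2}(\gamma))$ of condition~\ref{item:prop2}, with Corollary~\ref{cor:composition} supplying the analogous relations for each coset representative of $G$ modulo $\langle h\rangle$; the hyperbolic generators of finite image-order are treated identically by the companion corollary. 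Elements of $G$ with fixed points yield fixed curves through a cyclic stabilizer, giving condition~\ref{item:prop3}, and for an element acting freely the corollary on fixed-point-free elements in Section~\ref{sec:Gaction}, together with~\cite{India}, supplies the two fixed curves and their translates.

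For the count I would invoke the Riemann--Hurwitz relation~\eqref{eq:RH} and the elimination bookkeeping: Theorem~\ref{thm:Algorithm}, together with the counting lemma that the elimination leaves $2g$ generators and Lemma~\ref{lem:enough} guaranteeing the gluing can be completed, shows that exactly $2g$ classes remain distinct and nonzero in homology. The $n$ null-homologous classes are the images $\tau(KRK^{-1})$ of the defining relation under the $n$ coset representatives; each is trivial in $\Gamma$, hence null-homologous, and by Lemma~\ref{lem-Rhomology} they carry the homology relation $y_1+\cdots+y_r\approx^h 0$. These are the $n$ curves $h^k(\gamma)$ of the final clause of Definition~\ref{def:AHB}. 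The step I expect to be the main obstacle is reconciling the two counts: one must show that the redundancy among the $G$-translates, the elliptic relations of condition~\ref{item:prop2}, and the $n$ relation-curves combine to give exactly $2g$ independent classes and exactly $n$ null-homologous ones, with no double counting between the $M$-generators removed in Section~\ref{sec:Mremove} and the generators removed by elliptic elimination. Careful tracking of which Schreier generators are $M$-generators, and of which cyclic subgroups $\langle\phi(v)\rangle$ already contain an elliptic image, is what makes this bookkeeping close exactly.
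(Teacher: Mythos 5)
Your overall architecture does track the paper's: the candidate set is built from the Schreier generators $S_{1,v}$ and their $G$-translates, the case analysis runs through Lemma \ref{lem:Gxaction}, Lemma \ref{lem:homology-one} and Corollary \ref{cor:composition}, and the count of $2g$ surviving classes comes from the elimination machinery (Theorem \ref{thm:Algorithm}, Lemma \ref{lem:enough}). The genuine gap is your identification of the $n$ null-homologous members, which is precisely the step that makes the set \emph{adapted}. In the paper the $2g+n$ curves are all Schreier generators: the $2g$ survivors of the elimination together with the $M$-generators $S_{M,v}$, i.e.\ those with $Mv={\overline{Mv}}$, which are freely equal to $1$ and hence null-homologous. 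Because each $M$-generator is itself a translate $h^k(\gamma)$ of a curve $\gamma$ occurring under the earlier items of Definition \ref{def:AHB}, the final clause of that definition is satisfied, and deleting exactly these curves is what produces the adapted homology basis of Definition \ref{def:adaptedHomologyBasis} and Theorem \ref{thm:MAB} via the procedure of Section \ref{sec:Mremove}. Your candidates, the relator curves $\tau(KRK^{-1})$, are certainly null-homotopic, but they are words in many Schreier generators rather than translates $h^k(\gamma)$ of members of the set, so the set you construct fails the final condition of Definition \ref{def:AHB}: it is a generating set with $n$ null-homologous curves appended, but it is not adapted, and removing your $n$ curves does not implement the passage to a basis that the subsequent results require. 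The substantive claim you needed to prove --- that among the $G$-translates of the base curves exactly $n$ are trivial in homology, namely the $M$-generators --- is sidestepped rather than established by your substitution.

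A second, smaller error: you assert that ``elements of $G$ with fixed points yield fixed curves through a cyclic stabilizer, giving condition \ref{item:prop3}.'' This is backwards. Elements with fixed points are exactly those lying in conjugates of the cyclic groups generated by the elliptic images $\phi(x_i)$, and those produce the cyclic relations of condition \ref{item:prop2} via Lemma \ref{lem:Gxaction}; it is the \emph{fixed-point-free} elements that fix curves, as in the corollary at the end of Section \ref{sec:Gaction} and \cite{India}. You state the correct fact in the next clause of the same sentence, so the two halves contradict each other; only the second half should survive.
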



\section{Reduction to an adapted homology basis} \label{sec:reductionTObasis}
If we can specifically identify the $M$-generators using the procedure of section \ref{sec:Mremove} to remove them from an adapted generating set, we will have an {\sl adapted homology basis}.

\begin{defn} \label{def:adaptedHomologyBasis} Adapted Homology Basis is any set of adapted generators for the first homology from which the images of the null-homologous $M$-generators have been removed.
It will consists of $2g$ elements.
\end{defn}


\begin{thm} \label{thm:MAB} {\rm (Adapted Bases Exist)}
For any  $(\Gamma_0, S_0,  G)$ with  $S_0$ the quotient of the unit disc by the action of $\Gamma_0$  there exists  an integral homology basis adapted to the action of $G$ on the quotient surface $S=S_0/G$.gg 
The numbers of the different types of elements in the basis are determined by the surface kernel map or the generating vector in a manner that can be made precise.

\end{thm}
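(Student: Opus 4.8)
The plan is to assemble Theorem \ref{thm:MAB} from the machinery already built, treating it as the culmination of the reduction-and-action analysis rather than as an independent argument. First I would invoke Theorem \ref{thm:Algorithm}, which guarantees that the Schreier-Reidemeister presentation for $\Gamma$ can be reduced via elliptic, $M$- and gluing eliminations to a presentation with exactly $2g$ generators and a single defining relation in which every generator and its inverse occurs once. Passing to the abelianization, these $2g$ generators descend to $2g$ homology classes, and the count that they are independent and form a basis follows from the Lemma in section \ref{sec:thecount} together with the Riemann-Hurwitz verification in Lemma \ref{lem:enough}. This already produces Theorem \ref{thm:adaptedHOM}: an adapted generating set of $2g+n$ classes, of which $n$ are null-homologous (the $M$-generators) and $2g$ lie in distinct classes.

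Next I would produce the basis itself by applying Definition \ref{def:adaptedHomologyBasis}: removing the $n$ null-homologous $M$-generators from the adapted generating set leaves precisely $2g$ classes. The point requiring care is that these $2g$ classes genuinely span the integral first homology and are independent, not merely a generating set of the right cardinality; here I would lean on the fact that the single surviving relation, after abelianization, expresses one class in terms of the others exactly as in a standard surface presentation (the remark following equation \ref{eq:presentation} about reduction to a standard surface symbol), so that the rank is $2g$ as required. The adaptedness properties — that each surviving curve falls into one of the types (\ref{item:prop1})--(\ref{item:prop3}) of Definition \ref{def:AHB} — are exactly the content of Lemma \ref{lem:Gxaction}, Corollary \ref{cor:composition}, and the action computations of section \ref{sec:Gaction}, which describe $g_K(S_{L,v})$ via tails and prefixes and show that the $G$-orbit structure on the generators is of the prescribed form.

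Finally I would address the last sentence of the statement, that the numbers of each type are determined by the generating vector. This is where the bookkeeping of section \ref{sec:Mremove} enters: the classification of each Schreier generator as $E$-, $H$- or $M$-type, together with the orders $m_i$ and the branching data $(g_0, r, m_1, \dots, m_r)$, determines via the counts $\tfrac{n}{m_i}$ and the $M$-removal procedure exactly how many curves of type (\ref{item:prop1}), type (\ref{item:prop2}), and type (\ref{item:prop3}) appear. Since $\phi$ (equivalently the generating vector) fixes the conjugacy classes of the $\phi(x_i)$ and hence the stabilizer orders, the tallies are functions of that data alone.

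The main obstacle I anticipate is not the existence claim, which is essentially a repackaging of Theorem \ref{thm:Algorithm} and the $G$-action lemmas, but rather verifying that the $M$-removal of section \ref{sec:Mremove} interacts cleanly with the $G$-action so that removing null-homologous generators does not disturb the adaptedness of the surviving ones. Because some $M$-rows arise from removing more than one $M$-generator (as flagged at the end of section \ref{sec:Mremove}), one must check that the at-most-$n$ altered rows still conform to one of the permitted types in Definition \ref{def:AHB}, or else exhibit them as the ``exactly $n$ null-homologous'' curves of item (5); this reconciliation of the removal procedure with the orbit classification is the delicate step, and I would handle it by tracking, for each eliminated $M$-generator, which $g_K$-orbit it belonged to and confirming that its removal leaves a complete orbit of the remaining type.
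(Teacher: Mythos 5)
Your proposal follows essentially the same route as the paper: the paper also obtains Theorem \ref{thm:MAB} as the culmination of Theorem \ref{thm:Algorithm}, Theorem \ref{thm:adaptedHOM}, and the $M$-removal procedure of section \ref{sec:Mremove}, removing the null-homologous $M$-generators from the adapted generating set to leave a $2g$-element adapted basis whose type counts are determined by the generating vector. The delicate point you flag (that $M$-removal must not disturb adaptedness, since some rows absorb more than one $M$-generator) is exactly the issue the paper itself acknowledges in the remark following the theorem and in the matrix corollary of section \ref{sec:matrix}.
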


\begin{remk}
The difficulty is, of course, in identifying these generators. This can be done if one starts with the Cayley diagram of the group and/or its spanning tree. It can also be done if the set of Schreier generators has a maximal power element (see section \ref{sec:easier}). For prime order actions this is not a problem \cite{matrix, Jalg, India}. Some examples of adapted bases can be found in \cite{Link} and a  short-cut for computing the matrix using coset representatives with the maximal power property   \cite{inprep}.
\end{remk}

\section{The Matrix of the Action on an Adapted Homology Basis}
\label{sec:matrix}




\begin{cor}
The matrix of the action of $G$ with respect to an adapted will have a collection of sub-blocks of the following type:

\begin{enumerate}
\item Square permutation matrices.
\item Super permutation matrices. These are square matrices with $1$'s along the super-diagonal, every entry in the last row $-1$ and all other entries $0$.
    \item At most $(n-1)$ rows of $M$-type.

\end{enumerate}

\end{cor}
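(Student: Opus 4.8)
The plan is to read the matrix directly off the adapted generating set of Theorem \ref{thm:adaptedHOM} together with the explicit action of $G$ computed in Part \ref{part:THREE}, and then to account for the passage to an adapted basis (Theorem \ref{thm:MAB}) through the $M$-removal of section \ref{sec:Mremove}. First I would fix an adapted generating set $\mathcal{S_B}$ and order its $2g+n$ curves so that the curves belonging to a single clause of Definition \ref{def:AHB} are grouped together, and so that within each group a cyclic orbit $\gamma, h(\gamma),\ldots,h^{m_i-1}(\gamma)$ and its coset translates $g_h(\gamma),\ldots$ appear consecutively. Since $G$ permutes homology classes, the coordinate subspace spanned by each such group is $G$-invariant, so the matrix is block-diagonal with one block per group, and it suffices to identify the block arising from each clause.

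For clauses \ref{item:prop1} and \ref{item:prop3} of Definition \ref{def:AHB} the action is by genuine permutation of distinct basis curves. In clause \ref{item:prop1} the orbit $\{g(\gamma)\}_{g\in G}$ consists of $n$ distinct classes permuted freely, giving a permutation matrix by Lemma \ref{lem:Gxaction}; in clause \ref{item:prop3} the stabilizer $G_0$ acts trivially while coset representatives permute the $n/m$ translates, again a permutation matrix. These are the blocks of type (1). For clause \ref{item:prop2}, within one cyclic orbit the element $h$ sends $\gamma\mapsto h(\gamma)\mapsto\cdots\mapsto h^{m_i-2}(\gamma)$ and then $h^{m_i-2}(\gamma)\mapsto h^{m_i-1}(\gamma)\approx^h -(\gamma+h(\gamma)+\cdots+h^{m_i-2}(\gamma))$; transcribing this into an $(m_i-1)\times(m_i-1)$ block gives exactly $1$'s on the super-diagonal and $-1$'s across the last row, i.e.\ a super permutation matrix, and Corollary \ref{cor:composition} replicates this block across the $n/m_i$ cosets of $\langle h\rangle$. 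These are the blocks of type (2), and the translate curves of clause \ref{item:prop2} variant $\gamma=h^r(\gamma_0)$ are already absorbed into this coset structure.

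Finally, to pass from the $2g+n$ generators to the $2g$-element basis of Theorem \ref{thm:MAB}, I would invoke the $M$-removal procedure of section \ref{sec:Mremove}, which substitutes each null-homologous $M$-generator by its expression in the surviving generators. As observed there, the $n-1$ removed $M$-generators disturb only a bounded set of rows, whose entries are inherited from the substitutions and need not respect the clean permutation or super-permutation pattern; these yield the at most $n-1$ rows of $M$-type, giving block type (3).

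I expect the main obstacle to be precisely this last step: controlling how many rows are perturbed by $M$-removal and confirming that the perturbation stays inside at most $n-1$ rows rather than propagating through the entire matrix. The block-diagonal decomposition for clauses \ref{item:prop1}, \ref{item:prop2}, \ref{item:prop3} is essentially a transcription of the action equations already established in Part \ref{part:THREE}, so the genuinely delicate bookkeeping is tracking where each $M$-generator sits in the modified $R$-relations (the refinement promised in section \ref{sec:Mremove}) and checking that each substitution touches only the rows it is supposed to.
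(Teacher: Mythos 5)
Your proposal is correct and takes essentially the same route as the paper, which leaves this corollary's proof implicit: the permutation and super-permutation blocks are read directly off Definition \ref{def:AHB} together with the action computations of Part \ref{part:THREE}, and the at most $n-1$ rows of $M$-type come from the $M$-removal procedure of section \ref{sec:Mremove}, exactly as you describe. The bookkeeping worry you flag about how far $M$-removal propagates is the same point the paper itself treats only informally (it asserts the altered rows number at most $n$, coming from the $n-1$ removed $M$-generators), so your outline matches the paper's argument in both structure and its weakest step.
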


\section{Automorphism Lists} \label{sec:autos}

We use some of the automorphisms in the tables below. The initial list, due to Harvey \cite{Harvey}, has been expanded by Broughton-Wootton giving the tables below (page 6 of \cite{BW}):

$\begin{array}{|c|c|c|c|c|}
\hline
\; & \alpha_i & \beta_i & \alpha_j & \beta_j \\
 \hline \hline
U_i &  \alpha_i & \beta_i \cdot \alpha_i & \alpha_{i+1} & \beta_{i+1} \\
 \hline
 B_i & \alpha_i \cdot \beta_i & \beta_i & \alpha_{i+1} & \beta_{i+1}
  \\
  \hline
  R_i & \alpha_i \cdot \beta_i \cdot \alpha_i^{-1}   & \alpha_i^{-1} & \alpha_{i+1} & \beta_{i+1}\\
  \hline
  \sigma_i & \delta \cdot \alpha_{i+1} \cdot \delta^{-1} & \delta \cdot \beta_{i+1}  \cdot \delta^{-1} & \alpha_i & \beta_i \\
 \hline Z_i  & \alpha_i \cdot \alpha_{i+1} & \alpha_{i+1}^{-1} \cdot \beta_i \cdot \alpha_{i+1}  &  \epsilon \cdot \alpha_{i+1}  & \beta_{i+1}^{-1} \cdot \beta_i \cdot \epsilon \\
\hline \hline
\end{array}$

Here $\delta= [\alpha_i, \beta_i]$ and $\epsilon = [\alpha_{i+1}^{-1}, \beta_i]$

 $\begin{array}{|c|c|c|c|c|}
\hline
\; & \alpha_i & \beta_i & \gamma_j & Note \\
 \hline \hline
 B_{i,j} &  \alpha_i \cdot u \cdot \gamma_j \cdot u^{-1} & \beta_i & v  \cdot \gamma_j \cdot v^{-1} & u = \beta_i \alpha_i^{-1} \cdot \beta_i^{-1} \cdot w_2 \\
 \hline \hline
 U_{i,j} & \alpha_i & \beta_i \cdot x \cdot \gamma_j \cdot x^{-1}  & y \cdot \gamma_j \cdot y^{-1} & \mbox{with $x$ and $y$ as below} \\

  \hline \hline
  \end{array}$


  Here  $x =\alpha_i^{-1} \cdot \beta_i^{-1} \cdot w_2$ and $ y = x^{-1} \cdot  \alpha_i^{-1} x$
  where $w_2 = \Pi_{k+1}^{g_0} [\alpha_i, \beta_i] \Pi_{k=1}^{r-1} \gamma_k$.





\section{Acknowledgements}
This manuscript was completed over a number of years including times when I was a visitor at and/or supported by ICERM, CUNY Graduate Center, Princeton University. The term {\sl adapted basis} was coined in my PhD thesis. After I wrote the first draft of my thesis, Lipman Bers asked me to explain to him what I was describing about certain subgroups of the Mapping-class group (a.k.a the Tecum{\"u}ller{\"{u}}iller Modular group, the modular group) and their action. He said, "it needs a definition", call it having an {\sl adapted basis}. Others have since used this definition and the words he helped me coin. I also thank M. Condor for a helpful conversations, especially during a conference in Linkoping. 

 \bibliographystyle{amsplain}

\part{Appendices}
\appendix

\section{Summary of Symbols and Conventions} \label{sec:symbols}

$\approx$ freely equal

$\approx^h$ homologous

${\overline{v}}$ the right coset representative of $v$.

$S_{K,v} = Kv{\overline{Kv}}^{-1}$

$g_v$ the image in $G$ of $\phi(v)$

When write $S_{W,z}$ without a bar over the $W$, it is understood that the first subscript is the coset representative of $W$.

Since tails and prefixes cancel, we write $Xg(X)g^2(X)... $ etc and interchange it with $S_{K,X}$ when $g = \phi(K)$ without further comment.



\section{Easier Computation with a Maximal Power Element} \label{sec:easier}
We begin with a lemma.
\begin{lem}\label{lem:m-1}
If $v$ is an order $m$ generator of $\Gamma_0$, then $S_{{\overline{x^{m-1}}},x}$ is an $M$ generator if and only if ${\overline{x^m}}= x^m$. In that case,  $S_{{\overline{Kx^q}},x}$ is an $M$-generator for each $q =1,...,m-1$ for each coset representative $K$,.
\end{lem}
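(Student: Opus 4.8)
The plan is to unwind the definition of an $M$-generator and reduce both claims to a one-line computation involving the representative of $x^m$. Recall that a Schreier generator $S_{K,v}=Kv\,\overline{Kv}^{-1}$ is an $M$-generator exactly when it is freely trivial, i.e. when $Kv=\overline{Kv}$, equivalently when $Kv$ is itself a coset representative. Two facts drive everything. First, since we work with a maximal power element, $\overline{x^{\,q}}=x^{\,q}$ for $1\le q\le m-1$. Second, $\overline{x^{\,m}}=1$ in every case: as $\phi(x)$ has order $m$ we get $\phi(x^{\,m})=\phi(x)^{m}=1$, so $x^{\,m}\in\Gamma$ lies in the trivial coset, whose representative is the identity.

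Granting these, the equivalence is immediate. Taking $K=\overline{x^{\,m-1}}=x^{\,m-1}$ gives $\overline{Kx}=\overline{x^{\,m}}=1$, hence $S_{\overline{x^{\,m-1}},x}=x^{\,m-1}\cdot x\cdot\overline{x^{\,m}}{}^{-1}=x^{\,m}$. So this generator is freely trivial---an $M$-generator---iff $x^{\,m}=1$; and because $\overline{x^{\,m}}=1$ always, the equation $x^{\,m}=1$ is precisely the stated condition $\overline{x^{\,m}}=x^{\,m}$. I would also flag the conceptual content: $\overline{x^{\,m}}=x^{\,m}$ forces $x^{\,m}=1$, so it says exactly that $x$ is an \emph{elliptic} generator of order $m$, not a hyperbolic generator whose image merely has order $m$.

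For the second assertion I would use that, in the maximal power Schreier system attached to $x$, each $\langle\phi(x)\rangle$-orbit of cosets is represented by an $x$-chain: every representative has the form $K_{0}x^{\,j}$ with $0\le j\le m-1$, where $K_{0}$ is a fixed representative of a coset of $\langle\phi(x)\rangle$ in $G$. Then for $K=K_{0}x^{\,j}$ we have $\overline{Kx^{\,q}}=K_{0}x^{\,j+q}$ with the exponent read modulo $m$; appending a further $x$ and invoking $x^{\,m}=1$ at the wrap-around yields $\overline{Kx^{\,q}}\cdot x=\overline{Kx^{\,q+1}}$. Hence $S_{\overline{Kx^{\,q}},x}=\overline{Kx^{\,q}}\cdot x\cdot\overline{Kx^{\,q+1}}{}^{-1}=1$ for every $q=1,\dots,m-1$ and every $K$.

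The main obstacle is exactly the input to the last paragraph: showing that the maximal power representatives really split into $x$-chains across \emph{all} cosets, not merely the chain through the identity that the bare maximal power property supplies. I would handle this by constructing an adapted transversal---choose one representative $K_{0}$ per coset of $\langle\phi(x)\rangle$ in $G$ and append powers of $x$, checking that the Schreier initial-segment condition is preserved---and then confirming consistency with the representatives used elsewhere via the telescoping of tails in the elliptic relation $\tau(Kx^{\,m}K^{-1})=1$ of Theorem \ref{thm:SR}. It is worth noting that the hypothesis is genuinely needed: without it a short-lex choice could set $\overline{x^{\,m-1}}=x^{-1}$ and trivialize $S_{\overline{x^{\,m-1}},x}$ even for hyperbolic $x$, so the equivalence would fail.
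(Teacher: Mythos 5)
Your proof of the first assertion runs the logic in the opposite direction from the paper's, and in doing so it assumes the very thing the lemma is designed to produce. You take the maximal power property $\overline{x^q}=x^q$ for $1\le q\le m-1$ as a standing hypothesis (``since we work with a maximal power element'') and then conclude that $S_{\overline{x^{m-1}},x}=x^m$ is trivial exactly when $x^m=1$, i.e.\ exactly when $x$ is elliptic. The paper does the reverse: it unwinds the $M$-generator condition directly --- $S_{\overline{x^{m-1}},x}$ is an $M$-generator iff $\overline{x^{m-1}}\cdot x=\overline{x^m}=1$, i.e.\ iff $\overline{x^{m-1}}=x^{-1}$, which in the convention where elliptic inverses are written as positive powers reads $\overline{x^{m-1}}=x^{m-1}$ --- and only then invokes the Schreier initial-segment property to deduce $\overline{x^q}=x^q$ for all $q=1,\dots,m-1$. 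So in the paper the maximal power structure is the \emph{conclusion}, not the hypothesis; the lemma is the test by which maximal power elements are detected. Your reading also erases the hyperbolic case: under your standing hypothesis $S_{\overline{v^{m-1}},v}=v^m\neq 1$ for hyperbolic $v$, yet the paper immediately goes on to define hyperbolic maximal power elements as exactly those $v$ with $S_{\overline{v^{m-1}},v}=1$ (which occurs when $\overline{v^{m-1}}$ is the word $v^{-1}$) and states a corollary about them. The situation you flag at the end as a failure mode that the hypothesis must exclude is, for the paper, precisely the case the lemma is meant to capture.

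On the second assertion your instincts are better than your proof --- and, for what it is worth, better than the paper's. You correctly isolate the real issue: triviality of $S_{\overline{Kx^q},x}$ for \emph{every} representative $K$ needs the whole transversal to decompose into $x$-chains $K_0x^j$, and the hypothesis only controls the chain through the identity. The paper offers no argument at all here (``We also calculate in this case that\dots''), and in fact the claim fails for a general Schreier system: in the paper's own worked example of Section~\ref{sec:ex}, $x_1$ satisfies the hypothesis, yet $S_{\overline{x_2x_1},x_1}=S_{x_3^2,x_1}=x_3^2x_1x_6^{-2}$, which is a nontrivial element of $\Gamma$. So the conclusion can only hold after re-choosing the coset representatives in the adapted, chain-by-chain fashion you sketch; your construction (one $K_0$ per coset of $\langle\phi(x)\rangle$, closed under appending $x$, checked against the Schreier prefix condition) is the right repair, but as written it is a proposal rather than a proof, and it establishes the statement for a specially built transversal rather than for the transversal the lemma is ostensibly about.
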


\begin{proof}
We consider $\tau(x^m) =1$ where $x$ is an elliptic  generator of order $m$ and the relation
$S_{1,x}S_{{\overline{x^2}},x} \cdots S_{ {\overline{x^{m-2}}}, x}S_{{\overline{x^{m-1}}},x}
=1$

We will solve for
$S_{{\overline{x^{m-1}}},x}$ and eliminate it as a generator. We need to check whether
${\overline{x^{m-1}}}x =
{\overline{{\overline{x^{m-1}}}x}^{-1}} =1$, that is,  whether  ${\overline{x^{m-1}}}x =  {\overline{{\overline{x^{m-1}}x}^{-1}}}$
But the right hand side is ${\overline{x^m}} =1$.

This is ${\overline{x^{m-1}}} = x^{-1}$. But we write ${\overline{x^{m-1}}} = x^{m-1}$. Whence because these are Schreier generators ${\overline{x^q}}=x^q$ for all $q = 1,...,m-1$.

We also calculate in this case that $S_{Kx^{m-1},x}$ is an $M$-generator if $S_{x^{m-1},x}$ is one.

Thus if it is an $M$-generator we remove ${\frac{n}{m}}m$ images.

Thus we may assume that either one $x_j$ gives an $M$-generator or none do.
\end{proof}

\begin{cor}
If  $\Gamma_0$ contains one elliptic $x_i$ for which $S_{{\overline{x_i^{m-1}}},x_i} = 1$, then it contains exactly one such $x_i$ and in this case all  of the $n$ elements $S_{{\overline{Kx_i^q}},x_i}$ are $M$-generators and  thus are eliminated from the set of Schreier generators.
\end{cor}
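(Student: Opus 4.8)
The plan is to deduce the corollary directly from Lemma~\ref{lem:m-1}, whose proof already carries out the essential computation; what remains is to package that computation as a statement about \emph{all} the Schreier generators built on $x_i$ and to pin down the uniqueness clause. Throughout I would write $m=m_i$ and $h=\phi(x_i)$, so that $h$ has order $m$ in $G$.

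First I would translate the hypothesis. The assumption $S_{\overline{x_i^{m-1}},x_i}=1$ is exactly the statement that $S_{\overline{x_i^{m-1}},x_i}$ is an $M$-generator, so Lemma~\ref{lem:m-1} applies and yields two things: the maximal power property $\overline{x_i^{q}}=x_i^{q}$ for $q=1,\dots,m-1$, and the conclusion that $S_{\overline{Kx_i^{q}},x_i}$ is an $M$-generator for every coset representative $K$ and every $q=1,\dots,m-1$.

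Next I would check that these $M$-generators already account for the entire family $\{S_{L,x_i}\}$, which consists of exactly $n$ Schreier generators (one for each coset representative $L$). The needed fact is a covering statement: each $L$ is of the form $\overline{Kx_i^{q}}$. Fixing any $q\in\{1,\dots,m-1\}$ and taking $K=\overline{Lx_i^{\,m-q}}$ gives $\phi(\overline{Kx_i^{q}})=\phi(L)h^{-q}h^{q}=\phi(L)$, whence $\overline{Kx_i^{q}}=L$; equivalently the images $\phi(K)h^{q}$ of the first subscripts run over all of $G$, which regrouped as $n/m$ cosets of $\langle h\rangle$ times $m$ powers gives the count $(n/m)\cdot m=n$ recorded in the proof of Lemma~\ref{lem:m-1}. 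Hence every $S_{L,x_i}$ is an $M$-generator and is removed during $M$-elimination, which is the second assertion of the corollary.

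For the first assertion I would invoke the closing line of the proof of Lemma~\ref{lem:m-1}, namely that after applying an automorphism of $\Gamma_0$ from the lists in Section~\ref{sec:autos} one may assume at most one elliptic generator produces an $M$-generator of this type; together with the standing hypothesis that at least one does, this forces exactly one. I expect this uniqueness to be the only genuinely delicate point, and it is worth flagging that it is \emph{not} a consequence of the generator count: were two elliptics special, each would contribute all $n$ of its Schreier generators as $M$-generators while its $n/m_i$ elliptic relations became vacuous, so the extra $M$-generators and the skipped elliptic eliminations cancel and the final tally is still $2g$ by Riemann--Hurwitz~\eqref{eq:RH}. The real mechanism is instead the short-lex choice of coset representatives, which forces a decision whenever the cyclic subgroups $\langle\phi(x_i)\rangle$ meet nontrivially, supplemented by an automorphism that lengthens the powers of any second candidate so that they cease to be minimal representatives. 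That normalization is exactly what Lemma~\ref{lem:m-1} asserts, so citing it here is legitimate, but it is where the substantive work sits.
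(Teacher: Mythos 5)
Your proposal is correct and takes essentially the same route as the paper: the corollary is given no separate proof there, being read directly off Lemma \ref{lem:m-1}, whose proof already contains both the count ${\frac{n}{m}}\cdot m = n$ of removed images that your covering argument makes precise and the closing normalization claim (``either one $x_j$ gives an $M$-generator or none do'') that you, like the paper, invoke together with the hypothesis to get uniqueness. Your two additions --- the explicit check that every coset representative $L$ equals ${\overline{Kx_i^{q}}}$ for suitable $K$ and $q$, and the warning that uniqueness is a normalization fact rather than a counting consequence --- correctly fill in steps the paper leaves implicit, so your write-up matches (and is if anything slightly more careful than) the paper's own derivation.
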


We can also apply this to a hyperbolic generator where $m$ is the order if $\phi(v)$.

\begin{defn} Elements of this type are termed {\sl maximal power elements}.

\end{defn}
\begin{cor}
If  $\Gamma_0$ contains a hyperbolic generator $v$ for which $S_{{\overline{v^{m-1}}},v} = 1$, then it contains exactly one such $v$ and in this case all  of the $n$ elements $S_{{\overline{Kv^q}},v}$ are $M$-generators and  thus are eliminated from the set of Schreier generators.
\end{cor}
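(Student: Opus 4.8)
The plan is to reduce the substance of the statement to Lemma \ref{lem:m-1}, which I have already proved for an arbitrary order-$m$ generator of $\Gamma_0$ (with $m$ the order of $\phi(v)$, so that it applies to a hyperbolic $v$ as well), and then to supply a short counting argument for the uniqueness clause. For the claim that all $n$ translates are $M$-generators, I would apply Lemma \ref{lem:m-1} directly: the hypothesis $S_{\overline{v^{m-1}},v}=1$ is exactly the assertion that $S_{\overline{v^{m-1}},v}$ is an $M$-generator, and the lemma then propagates this to every translate, so that $S_{\overline{Kv^q},v}$ is an $M$-generator for each $q=1,\dots,m-1$ and each coset representative $K$. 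The only point requiring a genuinely new verification in the hyperbolic setting is that $\phi(v^m)=1$ still forces $\overline{v^m}$ to be the identity representative, so that the computation $S_{\overline{v^{m-1}},v}=\overline{v^{m-1}}\,v$ and the resulting criterion $\overline{v^{m-1}}=v^{-1}$ go through verbatim; this is where one uses that $v$ has infinite order in $\Gamma_0$, so that $v^m$ is a nontrivial element of $\Gamma$ even though $\phi(v^m)=1$.

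For the uniqueness clause I would count $M$-generators against the spanning tree of the Cayley graph of $G$, as in Section \ref{sec:Mremove}. There are exactly $n-1$ $M$-generators, matching the $n-1$ edges of a spanning tree on the $n$ cosets, and each such edge carries a well-defined generator label, so the $M$-generators split into disjoint families indexed by the letters of $\Gamma_0$. If $v$ has the stated property, its $v$-labelled $M$-generators are the tree edges running along the chains of $\langle\phi(v)\rangle$-cosets, of which there are $(n/m)(m-1)=n-n/m$; the closing edge of each chain is not a tree edge, consistent with the fixed element $S_{\overline{v^{m-1}},v}$ being the lone non-$M$ translate in the maximal-power normalization. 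Since $m\ge 2$ gives $n-n/m\ge n/2$, a second generator $w$ with the same property would contribute a disjoint family of at least $n/2$ further $w$-labelled tree edges, so the two families together would contain at least $n>n-1$ edges of a single spanning tree, which is impossible. Hence there is at most one such $v$.

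The step I expect to be the main obstacle is precisely the hyperbolic bookkeeping in the first paragraph. Unlike the elliptic case, where $x^m=1$ makes the closing relation trivial, here $v^m$ is a nontrivial kernel element, and one must carefully distinguish the genuine $M$-generators arising from $\overline{v^{m-1}}=v^{-1}$ from the fixed curve $S_{\overline{v^{m-1}},v}=v^m$ that appears under the opposite, maximal-power normalization $\overline{v^{m-1}}=v^{m-1}$. Once this distinction is pinned down through Lemma \ref{lem:m-1}, both the propagation to translates and the spanning-tree count are routine, and the corollary follows exactly as in the elliptic case.
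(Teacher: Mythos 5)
Your proposal is correct in substance and, for the main claim, takes the same route as the paper: the paper gives this corollary no separate proof at all, simply applying Lemma \ref{lem:m-1} to a hyperbolic generator once ``order'' is read as the order of $\phi(v)$, which is exactly your first paragraph. Your verification that $\phi(v^m)=1$ still forces $\overline{v^m}$ to be the identity representative, so that the criterion $\overline{v^{m-1}}=v^{-1}$ survives verbatim, is the only genuinely hyperbolic point in that reduction, and you handle it correctly (though note it is membership of $v^m$ in $\ker\phi$, not the nontriviality of $v^m$, that does the work there). Where you genuinely differ is the uniqueness clause. The paper's entire justification is the closing remark in the proof of Lemma \ref{lem:m-1} (``we remove ${\frac{n}{m}}m$ images \dots\ either one $x_j$ gives an $M$-generator or none do''), which is asserted rather than argued; your spanning-tree count supplies the missing argument and does so more carefully: each letter with the property contributes a family of $(n/m)(m-1)=n-n/m\ge n/2$ $M$-generators, families attached to distinct letters are disjoint since Schreier generators carry their letter as an index, and there are only $n-1$ $M$-generators in total, so a second such letter would force more than $n-1$ edges into one spanning tree. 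This buys two things the paper leaves implicit: the argument excludes a second maximal power element whether it is hyperbolic or elliptic, and it records the honest count of $n-n/m$ genuine $M$-generators per letter rather than $n$ --- the $n/m$ closing elements of the $\langle\phi(v)\rangle$-chains are not $M$-generators but conjugates of $v^m$, which for hyperbolic $v$ are nontrivial fixed curves in $\Gamma$. On that last point your bookkeeping is actually more faithful than the corollary's literal wording that ``all of the $n$ elements'' are $M$-generators: that reading only holds in the elliptic case, where the relation $x^m=1$ also trivializes the closing elements, and your third paragraph correctly isolates this as exactly where the hyperbolic case differs.
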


It follows from the above, that if the set Schreier representatives contains a maximal power element, then the adapted basis will not have rows of type $M$ because all of the $M$-words will be of the form  $S_{K,v}$ or $S_{K,x}$ depending upon whether the elements is a hyperbolic $v$ or an elliptic $x$. 

\section{Worked Example}\label{sec:ex}


During the  September $25-29, 2017$ workshop at Banff, Grzegorz Gromadzki asked about an adapted homology basis for the group $G= {\mathbb{Z}}_5 \times {\mathbb{Z}}_5$  acting on a surface $S$ of genus $g$ with quotient surface $S_0$ of genus $g=0$.

We compute an adapted homology basis reflecting for the action of the group  $G$ where the surface $S_0$ is  uniformized by $\Gamma_0$ with presentation
\begin{equation}
\Gamma_0 =
\langle x_1,x_2,x_3,x_4,x_5,x_5,x_6| x_j^5; j=1,...,6; x_1x_2x_3x_4x_5x_6 \rangle.
\end{equation}
If $U$ denotes the unit disc or the upper half plane model for hyperbolic two-space, we have $S_0= U\Gamma_0$. Let $\phi: \Gamma_0 \rightarrow G$. Set $\Gamma = \mbox{Ker} \phi$, the kernel of $\phi$. We choose generators for $G$ given by Grzegorz Gromadzki listed in a section below \ref{sec:gens} and choose the $25$ coset representatives also listed there. Note that the genus of $g$ of $S$ is $36$ as calculated by the Riemann Hurwitz relation so that the matrix of the action of $G$ on an integral homology basis for $S$ will be a $2g \times 2g $ matrix, that is,  a $72 \times 72$ matrix.

\subsection{Notation  and the Schreier-Reidemeister Theory and Theorem}
We recall the notation and the statement of the Reidemeister Theory and the Reidemeister Theorem as applied to this situation using \cite{MKS}.

Namely, for any element  $v \in \Gamma_0$ we denote its coset representative by ${\overline{v}}$ and a generator for $\Gamma_0$ by $S_{K,x_j}$ where $K$ runs over the complete set of $25$ coset representatives
$\{1, x_j^i, j=1,...6, i=1,...,4\}$ and $S_{K,v}= KV{\overline{Kv}}^{-1}$.

Here we have  chosen $25$  right coset representatives as listed below (section \ref{sec:gens}) and we let $K$ denote an arbitrary one of the right coset representatives.

We remind the reader that a Schreier-Reidemeister rewriting process $\tau$ writes a word in the generators of $\Gamma_0$ that lies in $\Gamma$ in terms of the generators $S_{K,v}$ for $\Gamma$ where $K$ runs over a complete set of Schreier  minimal right coset representatives. 


The rewriting process is defined as follows: Let $X= a_1^{\epsilon _i} \dots a_u^{\epsilon_u}$, where each $\epsilon_i$ is either $+1$ or $-1$ and $a_i$ is a generator of $\Gamma_0$.
Then $$\tau(X) = \Pi_{i=1}^r S_{V_i,a_i}^{\epsilon_i},$$
 where $V_i$ depends upon $\epsilon_i$. Namely, if $\epsilon_i = +1$, then $V_i = {\overline{a_1^{\epsilon_1} \dots a_{i-1}^{\epsilon_{i-1}}} }$ and if
$\epsilon_i = -1$, then $V_i = {\overline{a_1^{\epsilon_1} \dots a_{i-1}^{\epsilon_{i-1}} a_i^{-1}}}$.

We apply Theorem 2.9 page 94 of \cite{MKS} to our situation to obtain

 \begin{thm}
  We have generators $S_{K,x_j}, j = 1, ..6 $ with relations
  $$\tau(KRK^{-1}) \mbox{and} \tau(Kx_j^5K^{-1}), j= 1,...,6 \mbox{where} R= x_1x_2x_3x_4x_5x_6.$$
   Here $K$ runs over the full set of coset representatives.

 Further $S_{M,v} \approx 1$ if ${\overline{Mv}} = Mv$ and such elements are termed {\sl $M$-generators}.
 \end{thm}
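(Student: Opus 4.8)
The plan is to read this statement off from the general presentation theorem, Theorem \ref{thm:SR}, specialized to the data of the example: the quotient has genus $g_0 = 0$, there are $r = 6$ branch points, and every branching order is $m_i = 5$. With $g_0 = 0$ the presentation \eqref{eq:presentation} of $\Gamma_0$ has no hyperbolic generators $a_i, b_i$; only the six elliptic generators $x_1, \dots, x_6$ survive, subject to $R = x_1 x_2 x_3 x_4 x_5 x_6 = 1$ and $x_j^5 = 1$. Thus the only inputs to the Reidemeister--Schreier machinery are these six generators, the single relation $R$, and the six power relations.

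First I would confirm that the list $\{1\} \cup \{x_j^i : 1 \le j \le 6,\ 1 \le i \le 4\}$ of $25$ words is a legitimate minimal Schreier system for $\Gamma_0$ modulo $\Gamma$. Distinctness of the $25$ cosets comes from the chosen generating vector for $G = \mathbb{Z}_5 \times \mathbb{Z}_5$ recorded in Section \ref{sec:gens}: since $\mathbb{Z}_5 \times \mathbb{Z}_5$ has exactly $(25-1)/(5-1) = 6$ subgroups of order five, one arranges that the images $\phi(x_1), \dots, \phi(x_6)$ generate these six distinct subgroups, so the $\phi$-images of the $25$ words run exactly once over all of $G$. The Schreier (initial-segment) property is immediate because the only representatives of length greater than one are the powers $x_j^i$, and their initial segments $x_j^{i'}$ with $i' < i$ are again on the list; this is precisely the minimality in the sense of Section \ref{sec:SRSum}.

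With the Schreier system verified I would apply Theorem 2.9 of \cite{MKS}, equivalently Theorem \ref{thm:SR}, verbatim. It furnishes one Schreier generator $S_{K,x_j} = K x_j \overline{K x_j}^{-1}$ for each coset representative $K$ and each $j$, together with the relations $\tau(K R K^{-1}) = 1$, the elliptic relations $\tau(K x_j^5 K^{-1}) = 1$, and the $M$-relations. Because $g_0 = 0$ there is no commutator factor in $R$, so each $\tau(K R K^{-1})$ is just the rewrite of $x_1 x_2 x_3 x_4 x_5 x_6$, exactly as asserted.

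The closing clause on $M$-generators requires no new argument: by the definition of the Schreier generators, $S_{M,v} = M v\, \overline{M v}^{-1}$, so the condition $\overline{M v} = M v$ forces $S_{M,v} = M v (M v)^{-1} \approx 1$. I expect the only real work to be the bookkeeping of the previous paragraph — the distinctness and initial-segment checks — after which the statement is a transcription of Theorem \ref{thm:SR} into the notation of the example.
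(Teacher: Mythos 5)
Your proposal is correct and follows essentially the same route as the paper: the paper's own ``proof'' is simply to apply Theorem 2.9 of \cite{MKS} (equivalently Theorem \ref{thm:SR}) to the data $g_0=0$, $r=6$, $m_i=5$ with the $25$ coset representatives $\{1\}\cup\{x_j^i\}$, and the $M$-generator clause is, as you say, immediate from $S_{M,v}=Mv\,\overline{Mv}^{-1}$. The only difference is that you make explicit the hypothesis checks the paper leaves implicit (distinctness of the $25$ cosets via the six distinct order-$5$ subgroups, and closure under initial segments); note only that this initial-segment closure is the \emph{Schreier} property rather than minimality --- indeed the system is deliberately non-minimal, since $x_j^4$ stands in for the length-one word $x_j^{-1}$ --- but this does not affect the argument, as Reidemeister--Schreier requires only the Schreier property.
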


 $G$ acts on $\Gamma_0$ by conjugation and we denote the image of $\phi(v)$ by $g_v$ for any word $v \in \Gamma_0$.

 We think of the images of the $\tau(Kx^5K^{-1})$ as being broken up into $5$ non-conjugate word products
  each of length $5$ and we refer to theses as {\sl lines} (see section \ref{sec:lines}).
\subsection{Elimination of generators and relations}

We eliminate generators and relations using Tietze transformations of types T2 and T4 (page 50 of \cite{MKS})  that correspond to gluing  or to solving for a single generator and replacing or eliminating.

We follow the steps listed below with some notation and details as described in the sections referenced.  

\begin{description}
  \item[Step 1] We consider the $\tau(Kx_6^5K^{-1})$ which yield $6$ distinct relations each involving $5$ generators. We remove these $30$ generators and use these relations to glue the $25$ $\tau(KRK^{-1})$ relation into $5$ relations.

  \item[Step 2] We consider the $5$ non-conjugate lines $\tau(Kx_5^5K^-1)$ and use one of these lines to glue the five remaining relations to one single relation. For simplicity assume this is line \#5. (See sections \ref{sec:lines} and \ref{sec:x5lines}.)
        \item[Step 3] We remove the $24$ non-identity Schreier generators (section \ref{sec:gens}). In the situation for Schreier generators involving $x_6$ this creates some complications addressed in the next step.
  \item[Step 4] We consider the $\tau(xRx^{-1})$ and find the values of $x$ for which $S_{1,x_6}, S_{x_6,x_6},S_{x_6^2,x_6},S_{x_6^3,x_6}$ occurs.  Since all of these are $\approx 1$, we can solve for the $S_{{\overline{xx_1x_2x_3x_4}},x_5}$ occur that occur for each of the four. Call these four $V_1,V_2,V_3,V_4$. We can solve these in terms of the four other words in the line each involving four Schreier generators with $x_1,x_2,x_3,x_4$ (section \ref{sec:x5lines}).
  \item[Step 5] For each of the five $x_j$ lines, where $j = 1,2,3,4$, we can eliminate the first line $\tau(x_j^5)$ as its first elements are $M$-generators making the last elements $\approx 1$. This eliminates $4 \times 5=20$ Schreier generators  (section \ref{sec:lineone}).
  \item[Step 6] For each of the four remaining such lines we solve for the last element keeping the first four and calculate the action of $G$ (or $g_x$ as described below (sections \ref{sec:gx} and \ref{sec:lines2-5}). We obtain for each $x_j$ and its images $4$ matrices that are $4 \times 4$ of the form: The matrix has $16$ $4\times 4$ sub-blocks.
                                         of the form $M_4= \left(
                                                       \begin{array}{cccc}
                                                         0 & 1 & 0 & 0 \\
                                                         0 & 0 & 1 & 0 \\
                                                         0 & 0 & 0 & 1 \\
                                                         -1 & -1 & -1 & -1 \\
                                                       \end{array}
                                                     \right)$
  \item[Step 7] We have the full $72 \times 72$ matrix with a $64 \times 64$ submatrix broken into $16$ $4 \times 4$ blocks or $M_{16}= \left(
                                                       \begin{array}{cccc}
                                                         M_4 & 0 & 0 & 0 \\
                                                         0 & M_4 & 0 & 0 \\
                                                         0 & 0 & M_4 & 0 \\
                                                         0 & 0 & 0 & M_4 \\
                                                       \end{array}
                                                     \right)$
  \item[8] Finally, we look at the remaining $3$ lines from $x_jx_5^5x_j{-1}$ where $j = 2,  3,4$
  and obtain as in section \ref{sec:x5lines} (or section \ref{sec:alt5})  a $2\times 2$ block and a $72 \times 1$ for each of these three lines completing the $72 \times 72 $ matrix.
\end{description}


\subsection{Lines} \label{sec:lines}

We refer to the non-conjugate {\sl lines} of $x_j$, $j = 1,...,6$
as
\begin{description}
   \item[line \#1] $\tau(K_1T K_1^{-1} )= \Box \Box \Box \Box \Box$
   \item[line \#2] $\tau(K_2TK_2^{-1}) =\Box \Box \Box \Box \Box$
   \item[line \#3] $\tau(K_3TK_3^{-1})=\Box \Box \Box \Box \Box$
   \item[line \#4] $\tau(K_4TK_4^{-1})=\Box \Box \Box \Box \Box$
   \item[line \#5] $\tau(K_5TK_5^{-1})=\Box \Box \Box \Box \Box$
   \item[line \#6] $\tau(K_6TK_6^{-1})=\Box \Box \Box \Box \Box$
 \end{description}

where $K_1=id$ and $\{K_i| i = 1, 2,...,6 \}$ and $T=x^5$ as these run over a complete set of coset representatives that gives over the set of $6$ lines a complete set of $25$ Schreier generators for $\Gamma$. However for $T=x_j^5$ for each $j$ in the line $\#j$
 is the same up to conjugacy as line $\#1$.

For example, in computing
$\tau(x_1x_2^5x_1^{-1})= $
$S_{1,x_1}S_{x_1,x_2}S_{{\overline{x_1x_2}},x_2}S_{{\overline{x_1x_2,x_2}},x_2}S_{{\overline{x_1x_2^2}},x_2}
S_{{\overline{x_1x_2^3}},x_2}S_{{\overline{x_1x_2^4}},x_2}S_{1,x_1}^{-1} \approx $

$ S_{x_1,x_2}S_{{\overline{x_1x_2}},x_2}S_{{\overline{x_1x_2,x_2}},x_2}S_{{\overline{x_1x_2^2}},x_2}
S_{{\overline{x_1x_2^3}},x_2}S_{{\overline{x_1x_2^4}},x_2}$, we need to compute the coset representatives ${\overline{x_1x_2^t}}$. Here the $S_{1,x_1}$ terms have been omitted because they are freely equal to $1$.

For the sake of brevity, we note that a typical {\sl line} looks like
$$ \Box \Box \Box \Box \Box =1$$ and speak of the first, second ... , last element of the line when the subscripts are understood.

\subsection{The $24$ non-identity  Generators} \label{sec:gens}
 Greg  asked for the following generators:
 \vskip .1in
 $(3,0), (4,4),(1, 2), (1,3), (1,4), (0,2)$

 $(3,0)^2 = (6,0) = (1,0) $

 $(3,0)^3 = (3,0)(1,0) = (4,0)$

  $(4.4)^2= (3,3)$

  $(4,4)^3= (1,1)$

  $(1,2)^2 =(2,4)$

  $(1,2)^3= (2,0)$


  $(1,3)^2 = (2,1)$

  $(1,3)^3= (3,4)$

  $(1,4)^2 = (2,3)$

  $(1,4)^3 = (3,1)$

  $(0,2)^2 = (0,4)$

  $(0,2)^3 = (0,1)$
\vskip .05in
Thus we   can take $1$,$x_i$, $x_i^2, x_i^3 \mbox{and}   x_i^4$ $ i= 1,..., 6$ as coset reps. These are $25$ and all will give $M$-generators.
We have

We can compute the images of the powers of the $x^j$ under $\phi$.
\vskip .1in
$\phi(x_1) = (3,0)$
$\phi(x_1^2) = (1,0)$
$\phi(x_1^3) = (4,0)$
$\phi(x_1^4) = (2,0)$
\vskip .1in

$\phi(x_2) = (4,4)$
$\phi(x_2^2) = (3,3)$
$\phi(x_2^3) = (2,2)$
$\phi(x_2^4) = (1,1)$
\vskip .1in
$\phi(x_3) = (1,2)$
$\phi(x_3^2) = (2,4)$
$\phi(x_3^3) = (3,1)$
$\phi(x_3^4) = (4,3)$
\vskip .1in
$\phi(x_4) = (1,3)$
$\phi(x_4^2) = (2.1)$
$\phi(x_4^3) = (3,4
)$
$\phi(x_4^4) = (4,2)$
\vskip .1in
$\phi(x_5) = (1,4)$
$\phi(x_5^2) = (2, 3)$
$\phi(x_5^3) = (3, 2)$
$\phi(x_5^4) = (4, 1)$
\vskip .1in
$\phi(x_6) = (0,2)$
$\phi(x_6^2) = (0, 4)$
$\phi(x_6^3) = (0,1)$
$\phi(x_6^4) = (0,3)$

\vskip .1in

\subsection{Images of $g_x$ and its powers} \label{sec:gx}
We compute the action of $G$ on $\gamma$ noting that $g_x(W)= \tau(xWx^{-1})$ where $W$ is any word in the Schreier generators if $\phi(x) = g_x$. Need to fix or add


Usually if we compute $g_W(S_{1,v}) =\tau(w_1 \cdots w_tS_{1,v}(w_1 \cdots w_t)^{-1}$ where $W= w_1 \cdot w_t$ is a minimal Schreier generator  we obtain $S_{W,v}$ multiplied by the product of words denoted by $T_{W,v}^{-1}$   so that $g_W(S_{1,v}) = S_{W,v}T_{W,v}^{-1}$.
We term $T_{W,v}$  the {\sl Tail} of $g_W$.
However, in this case because all of our Schreier representatives are $M$-generators we have a simpler situation.

We need to compute $g_x^t(S_{x^q,x})$ where $x$ is an $x_i, i = 1,...,5$ and $q= 1, ..., 4$.

$\tau(x^tS_{x^q,x}x^{-t}) = $
$$S_{1,x}S_{x,x} \cdots S_{{x^{q-1}},x} S_{{x^q},x}S_{x^q,x}^{-1}(S_{1,x}S_{x,x} \cdots S_{{x^{q-1}},x})^{-1}.$$

But because $x^t$ is an $M$-generator for each $t=1,2,3,4$, the first and last $q$ elements are freely equal to $1$.


If $\gamma=S_{1,x}$ and ${\overline{x}} = x$
$$g_x(\gamma) = \tau (xx{\overline{x}}^{-1}x^{-1}) = S_{1,x}S_{{\overline{x}},x}
S_{{\overline{x}},{\overline{x}}x}S_{1,x}^{-1} = S_{{\overline{x}},x}
S_{{\overline{x}},{\overline{x}}}$$

We also compute that in $\tau(xxxxxxx^{-1})= g_x(\tau(x^5))$
 $S_{x,x}$ has a prefix which we denote by $P_{x,x}$ and that $P_{x,x}= T_{1,x}^{-1}$ so that these will cancel and if $\gamma= S_{1,x}$, then $\tau(x^n)=1$ can be written as $$\gamma g_x(\gamma) g_x^2(\gamma)g_x^3(\gamma)g_x^4(\gamma)=1$$
and a similarly for $\tau(R)$ so that if
 $\gamma_1 =S_{1,x_1}$,  $\gamma_2 =S_{1,x_2}$,  $\gamma_3 =S_{1,x_3}$, $\gamma_4 = S_{1,x_4}$  $\gamma_5 =S_{1,x_5}$ and  $\gamma_6 =S_{1,x_6}$, we have
$$\tau(R)  = \gamma_1 \cdot [ (g_{x_1x_2})(\gamma_2)]\cdot [g_{x_1 \cdot x_2 \cdot x_3}(\gamma_3)]\cdot [g_{x_1x_2x_3x_4}(\gamma_4)] \cdot [g_{x_1x_2x_3x_4x_5}(\gamma_5)]  \cdot [g_{x_1x_2x_3x_4x_5x_6}(\gamma_6)]. $$
Here we used
$\tau(R) = S_{1,x_1}S{x_1,x_2}S_{x_3^2,x_3}S_{x_3^3,x_4}S_{x_2,x_5}S_{x_6^4,x_6}   $

Below we carefully compute $\tau(yx^5y^{-1}) =S_{y,x}S_{{\overline{yx}},x}S_{{\overline{yx^2}},x}S_{{\overline{yx_3}},x}S_{{\overline{yx^4}},x} =1$
where $x = x_1,x_2,x_3,x_4,x_5 \; \mbox{or} \; x_6$.


\subsection{Elimination} \label{sec:elimination}

In this section we carry out very carefully some of  eliminations referred to earlier.
\subsubsection{Elimination of line \#1} \label{sec:lineone}
For each $x_j$ line $\# j$ is the same as line $\# 1$. Thus we adjust the numbering for each $x_j$ so that there are $5$ lines eliminating what was previously termed line $\#j$.

For each $x_j, j=1,...,5$, line \#1 gives $\tau(x^5)= \Box \Box \Box \Box \Box$
But the first four boxes represent $M$ generators $\approx 1$. Thus the last box does not contribute a generator.

   \subsubsection{Elimination along lines $\#2-5$ for $x_j, j = 1,2,3,4$} \label{sec:lines2-5}

   For each $x_j, j=1,...,4$ and for each coset representative $y$ we have $\tau(yRy^{-1})= \Box \Box \Box \Box \Box$. We use one image for each of the five non-conjugate lines.
   Thus we can solve for generator in the last box and replace it by the product of the inverses of the others.

   Thus each lines  \#(2-5), we have $4$ generators. This is a total of $16$ generators.

   That is $16 \times 4 = 64$ generators.

   \subsubsection{Elimination along the $x_5$ lines} \label{sec:x5lines}
   We for $x_5$ eliminate line \#5 for gluing and we  eliminate line \#1.

   This leaves $3$ lines each of which contributes $4$ generators or a total of $12$ generators.

   That is $76$ generators. To compensate note that
 when we computed $\tau(Kx_1x_2x_3x_4x_5x_6K^{-1})$, we obtained four  of the form $S_{x_6^i,x_6} \approx 1, \;\; i=1,2,3,4$.
   For these, we obtain a relation that $S_{**,x_5}^{-1}$ can be written as the product of words in
   the $S_{***,x_t}$ where $t=1,2,3, \mbox{and } 4$. Here The $**$ and the $***$ denote the appropriate first subscript in Schreier generators that occur. This can be computed.

   Thus in the $3$ lines each with $4$ generators, we can eliminate the last one involving an $S_{***,x_5}$
   and in computing the matrix of the action instead of a $4 \times 4$ matrix, we obtain a $3 \times 3$ matrix of the form with a $3 \times 3$ matrix $\left(
                                            \begin{array}{ccc}
                                              0 & 1 & 0\\
                                              0 & 0 & 1\\
 0 & 0 & 0 \\
                                             \end{array}
                                          \right)$
   but the last row is part of the row of length $72$ where there are other entries which can be calculated. 
   These will be $0$, $1$ or $-1$.

That is, we omit a further $4$ generators and have $72$ remaining.

\subsubsection{Alternate phrasing} \label{sec:alt5}

We have $\tau(xx_5^5x^{-1})= S_{x,x_5} S_{{\overline{xx_5}},x_5}
S_{{\overline{xx_5^2}},x_5}S_{{\overline{xx_5^3}},x_5}S_{{\overline{xx_5^4}},x_5}$
So we eliminate
$S_{{\overline{xx_5^4}},x_5}$ and replace it by the product of inverses.
We also have $\tau(yx_1x_2x_3x_4x_5x_6y^{-1}) =1$
But as $y$ varies over $x_1,x_2,x_3,x_4,x_5,x_6$ we obtain $4$ instances where the
$x_6$ term is $S_{1,x_6}$, or
$S_{x_6,x_6}$, $S_{x_6^2,x_6}$, $S_{x_6^3,x_6}$

All of which are $\approx 1$. Thus the term $S_{K,x_6}$ drops out and we can then solve for the $S_{{\overline{Kx_6^{-1}}},x_5}$ in terms that only involve $x_1,x_2, x_3 \mbox{or} x_4$
as the subscript in the Schreier generators.

\subsubsection{Further notation} \label{secx:x5x6}

We note
$\tau(xx_1x_2x_3x_4x_5x_6x^{-1}) =$
$$S_{x,x_1} S_{{\overline{xx_1}},x_2} S_{{\overline{xx_1x_2}},x_3}S_{{\overline{xx_1x_2x_3}},x_4}S_{{\overline{xx_1x_2x_3x_4}},x_5} = S
_{{\overline{xx_1x+2x_3x_4x_5}},x_6}^{-1}.$$

Solve for $x$ such that the right hand side is one of $S_{1,x_6}$, $S_{x_6,x_6}$, $S_{x_6^2,x_6}$, or
$S_{X_6^3,x_6}$ each of which is $\approx 1$.  There are four such $x$, call them $Y_1,Y_2,Y_3,Y_4$

The solve for the corresponding $$S_{{\overline{xx_1x_2x_3x_4}},x_5}= (S_{x,x_1} S_{{\overline{xx_1}},x_2}S_{{\overline{xx_1x_2}},x_3}S_{{\overline{xx_1x_2x_3}},x_4})^{-1}
S_{{\overline{xx_1x_2x_3x_4}},x_5}$$ where $x = Y_1,Y_2,Y_3,Y_4$.
Then looks at the lines $\tau(x_ix_5^4x_i^{-1})= A_iB_iC_iD_iE_i$.
We have $g_x(A_i)=B_i, g_x(B_i)=C_i, g_x(C_i)=D_i,g_x(D_i)=E_i, g_x(E_i) =A_i$.
We have solutions for $E_i$ in terms of $A_i,B_i,C_i,D_i$ but also solutions for $E_i$ in terms of $(S{{\overline{*}},x_1}S_{{\overline{*x_1}},x_2}
S_{{\overline{*x_1x_2}},x_3}S_{{\overline{*x_1x_2x_3}},x_4})^{-1}$
Thus we eliminate the $E_i$ and the $g(D_i)$ to obtain for each of the $3$ lines, the elements
$A_i,B_i,C_i$ giving a $2 \times 3$ matrix of the form $\left(
                                           \begin{array}{ccc}
                                             0& 1 & 0\\
                                             0 & 0 & 1 \\
                                           \end{array}
                                         \right)$
                                         and a
                                         $72 \times 1$
                                                                                  matrix with $0$'s in the $A_i,B_iC_i$ slots but appropriate $-1$'s in other generator's slots.

                                         We call this line in the matrix an $Y_iL_5$ line.

                                         \subsection{The $72 \times 72$ Matrix} \label{sec:matrix}

                                         The matrix has $16$ $4\times 4$ sub-blocks
                                         of the form $\left(
                                                       \begin{array}{cccc}
                                                         0 & 1 & 0 & 0 \\
                                                         0 & 0 & 1 & 0 \\
                                                         0 & 0 & 0 & 1 \\
                                                         -1 & -1 & -1 & -1 \\
                                                       \end{array}
                                                     \right)$

                                                     and four other $2 \times 2$ submatrix blocks along with
                                                     $3$ lines of the form $Y_iL_5$ $i=1,2,3$.

                                                     This gives a $ 72 \times 72$ matrix that gives the action of $G$ using where each entry is only $1,-1,\; \mbox{or} \;0$



\end{document}